\newtheorem{Theorem}{Theorem}[section]
\newtheorem{Definition}[Theorem]{Definition}
\newtheorem{Lemma}[Theorem]{Lemma}
\newtheorem{Proposition}[Theorem]{Proposition}
\newtheorem{Corollary}[Theorem]{Corollary}
\numberwithin{equation}{section}
\def\nt{\noindent}
\begin{document}

\title{A rescaled expansiveness for flows}

%    Remove any unused author tags.

%    author one information
\author{Xiao Wen }
\address{School of Mathematics and System Science, Beihang University, Beijing 100191, China}
\curraddr{} \email{wenxiao@buaa.edu.cn}
\thanks{}

%    author two information
\author{Lan Wen}
\address{LMAM, School of Mathematical Sciences, Peking University, Beijing 100871, China}
\curraddr{} \email{lwen@math.pku.edu.cn}
\thanks{}

\subjclass[2010]{Primary 37C10,37D30}
%    For articles to be published after 1 January 2010, you may use
%    the following version:
%\subjclass[2010]{Primary }

\keywords{Rescaling expansive, Singular hyperbolic, Multi-singular hyperbolic,
Linear Poincar\'e flow, Sectional Poincar\'e map}

\date{}

\dedicatory{}

\begin{abstract}
We introduce a new
version of expansiveness for flows. Let $M$ be a compact Riemannian manifold without boundary and $X$ be
a $C^1$ vector field on $M$ that generates a flow $\varphi_t$ on $M$.  We call $X$ {\it rescaling expansive} on a compact invariant set $\Lambda$ of $X$ if for any $\epsilon>0$ there is $\delta>0$ such that, for any $x,y\in \Lambda$ and any time
reparametrization $\theta:\mathbb{R}\to \mathbb{R}$, if $d(\varphi_t(x), \varphi_{\theta(t)}(y)\le \delta\|X(\varphi_t(x))\|$ for all $t\in \mathbb R$, then $\varphi_{\theta(t)}(y)\in \varphi_{[-\epsilon, \epsilon]}(\varphi_t(x))$ for all $t\in \mathbb R$. We prove that every multisingular hyperbolic set (singular hyperbolic set in particular) is rescaling expansive and a converse holds generically.

\end{abstract}

\maketitle

\section{Introduction}
Expansiveness is a strong symbol of chaotic dynamics that has been studied extensively.
Recall for systems of discrete time, say for a homeomorphism $f$ of a compact metric space $M$, {\it expansiveness} states that there is $\delta>0$ such that, for any $x$ and $y$ in $M$,  $d(f^n(x), f^n(y))<\delta$ for all $n\in \mathbb Z$ implies $x=y$. In other words, expansiveness requests that any two different points $x$ and $y$ must get separated in a uniform distance $\delta$ at certain moment $n$. For systems of continuous time, the situation is quite different. As Bowen-Walters [BW] point out, for a flow $\varphi_t$ on $M$, any orbit itself is  a priori  ``non-expansive" because, for any $\delta> 0$, there is $\eta>0$ such that if $y=\varphi_s(x)$ with $s\in (-\eta, \eta)$ then $d(\varphi_t(x), \varphi_t(y))=d(\varphi_t(x), \varphi_s(\varphi_t(x)))<\delta$ for all $t\in \mathbb R$. Thus the best possible expansive property one could expect for a flow seems to be that for any $\epsilon>0$ there is $\delta>0$ such that if $y$ $\delta$-shadows $x$ then $y=\varphi_s(x)$ with $s\in (-\epsilon, \epsilon)$. To make the definition a conjugacy invariant and to rule out some pathological behavior one needs to allow time-reparametrizations. This leads to the following definition introduced by Komuro [Kom1]:

 A flow $\varphi_t$ is {\it expansive} on a compact invariant set $\Lambda$ of $\varphi_t$ if for any $\epsilon>0$ there is $\delta>0$ such that, for any $x$ and $y$ in $\Lambda$ and any surjective increasing continuous functions $\theta: \mathbb R\to \mathbb R$, if $d(\varphi_t(x), \varphi_{\theta(t)}(y)\le \delta$ for all $t\in \mathbb R$, then $\varphi_{\theta(t_0)}(y)\in \varphi_{[-\epsilon, \epsilon]}(\varphi_{t_0}(x))$ for some $t_0\in \mathbb R$. Komuro [Kom1]  proved that  the geometrical Lorenz attractor \cite{Lor}\cite{Gu} is expansive.  Araujo-Pacifico-Pujals-Viana [APPV] proved that every singular hyperbolic (see definition below) attractor in a $3$-dimensional manifold is expansive.

In this paper we introduce another version of expansiveness for flows, in which the shadowing condition $d(\varphi_t(x), \varphi_{\theta(t)}(y)\le\delta$ is rescaled by the flow speed:

\begin{Definition}
{\rm A flow $\varphi_t$ generated by a $C^1$ vector field $X$ is {\it rescaling expansive} on a compact invariant set $\Lambda$ if for any $\epsilon>0$ there is $\delta>0$ such that, for any $x,y\in \Lambda$ and any increasing continuous functions $\theta: \mathbb R\to \mathbb R$, if $d(\varphi_t(x), \varphi_{\theta(t)}(y)\le \delta\|X(\varphi_t(x))\|$ for all $t\in \mathbb R$, then $\varphi_{\theta(t)}(y)\in \varphi_{[-\epsilon, \epsilon]}(\varphi_t(x))$ for all $t\in \mathbb R$.}

\end{Definition}

Here $\theta$ is not assumed to be surjective. But we will see that, for small $\delta$, the shadowing condition $d(\varphi_t(x), \varphi_{\theta(t)}(y)\le \delta\|X(\varphi_t(x))\|$ forces $\theta$ to be surjective. Also note that in the definition if  $x$ is a singularity then $y=x$, and if $x$ is regular then  $y$ is regular if $\delta$ is small. Similarly, in the definition of expansiveness of Komuro, if one of the two points $x$ and $y$ is a hyperbolic singularity and if $\delta$ is small, then the other point must be the same singularity by the Hartman-Grobman theorem. Thus the expansive property for flows has  nontrivial behavior only  when both $x$ and $y$ are regular points.

The idea of rescaling the size of neighborhoods of a regular point by the flow speed comes from the classical work of Liao on standard systems of differential equations [L1, L2]. The recent paper of Gan-Yang \cite{GY}  extracts geometrically the ideas of Liao  to form an important tool in their work. See also \cite{HW}, \cite{SGW} and \cite{Y} for some relevant applications.
We remark that for nonsingular flows the two definitions, expansiveness and rescaled expansiveness,  are equivalent. (There is a discussion on the equivalence in the appendix at the end of this paper.) Nevertheless for flows with singularities we do not know if the two definitions imply one another.

 %Also, the robust version, i.e., robust rescaling expansivity, implies the so called star condition which, by a recent result of Bonatti-de Luz [BL], is generically equivalent to the multisingular hyperbolicity discussed next.

In this paper we prove that every multisingular hyperbolic set,  singular hyperbolic set in particular, is rescaling expansive and a converse holds generically.
We first state the definition of singular hyperbolic set, which is introduced by Morales, Pacifico and Pujals \cite{MPP}, partly to characterize the celebrated geometrical Lorenz attractor \cite{Lor}\cite{Gu}.

Let $M$ be a $d$-dimensional compact  Riemannian manifold without  boundary and $X$ be a $C^1$ vector field on $M$. Denote $\varphi_t=\varphi^X_t$ the flow generated by $X$, and $\Phi_t=d\varphi_t:TM\to TM$
the  tangent flow of $X$. We call $x\in M$ a {\it singularity} of $X$ if $X(x)=0$. Denote ${\rm Sing}(X)$ the set of singularities of $X$. We call $x\in M$ a {\it regular point} if $x\in M\setminus {\rm Sing}(x)$.

Let $\Lambda$ be an invariant set of $X$. Let $C>1, ~ \lambda>0$ be given. We say a continuous $\Phi_t$-invariant splitting $T_\Lambda M=E\oplus F$ is a {\it $(C,\lambda)$-dominated splitting} with respect to $\Phi_t$ if
$$\|\Phi_t|_{E_x}\|\cdot\|\Phi_{-t}|_{F_{\varphi_t(x)}}\|<Ce^{-\lambda t}$$
for any $x\in \Lambda$ and $t>0$.

\begin{Definition}
{\rm Let $\Lambda$ be a compact invariant set of $X$. Let $C>1, ~ \lambda>0$ be given. We say that $\Lambda$ is {\it positively $(C,\lambda)$-singular hyperbolic} for
$X$ if there
is  a $(C,\lambda)$-dominated splitting
$T_\Lambda M=E\oplus F$ such that the following three conditions are
satisfied:

(1) the subbundle $E$ is $(C,\lambda)$-contracting with respect to $\Phi_t$,
that is, $$\|\Phi_t|_{E_x}\|<Ce^{-\lambda t},$$ for any $x\in\Lambda$
and $t>0$.

(2) the subbundle $F$ is $(C,\lambda)$-area-expanding with respect to $\Phi_t$, that is,
$$|\det(\Phi_{-t}|_{L})|<Ce^{-\lambda t},$$
for any $x\in \Lambda$ and any two dimensional subspace $L\subset F_x$
and any $t>0$,

(3) every singularity in $\Lambda$ is hyperbolic.

We say that $\Lambda$ is {\it singular hyperbolic} for $X$ if $\Lambda$ is
positively singular hyperbolic for $X$ or $-X$.}
\end{Definition}

We will prove that every singular hyperbolic set is rescaling expansive. This will be a corollary of Theorem A stating that every {\it multisingular hyperbolic set} is rescaling expansive. The notion of multisingular hyperbolic set, introduced recently by Bonatti-da Luz [BL], is  more general than the notion of singular hyperbolic set. (Proposition \ref{lemma32} explains that every singular hyperbolic set is multisingular hyperbolic.) First we recall the linear Poincar\'e flow defined on the normal bundle of $X$ over regular points of $X$. For  $x\in M\setminus {\rm Sing}(X)$, denote  the {\it normal space} of $X(x)$ to be  $$N_x=N_x(X)=\{v\in T_x M:v\perp X(x)\}.$$  Denote the {\it normal bundle} of $X$ to be
$$N=N(X)=\bigcup_{x\in M\setminus
{\rm Sing}(X)} N_x.$$ The {\it linear Poincar\'e flow } $\psi_t:N\to N$ of $X$  is then defined to be the orthogonal projection of $\Phi_t|_N$ to $N$, i.e.,
$$\psi_t(v)=\Phi_t(v)-\frac{\langle \Phi_t(v), X(\varphi_t(x))\rangle}{\|X(\varphi_t(x))\|^2}X(\varphi_t(x))$$
for any $v\in N_x$, where $\langle\cdot, \cdot\rangle$ denotes the Riemannian metric.

The notion of multisingular hyperbolicity is formulated using the {\it extended linear Poincar\'e flow} \cite{LGW}, a  ``compactification" of the usual linear Poincar\'e flow.
Denote $SM=\{e\in TM:\|e\|=1\}$ the unit sphere bundle of $M$ and $j:SM\to M$  the bundle projection defined by $j(e)=x$ if $e\in SM\cap T_xM$. Note that $SM$ is compact.
The tangent flow $\Phi_{t}$ induces a flow
$$\Phi_{t}^{\#}:SM \to SM$$
$$\Phi_{t}^{\#}(e)=\Phi_{t}(e)/\|\Phi_{t}(e)\|.$$
For any $e\in SM$, let
$$N_e=\{v\in T_{j(e)}M: v\perp e\}$$
be the normal space of $e$. Denote
$$N=N_{SM}=\bigcup_{e\in SM}N_e.$$
Then $N$ is a $d-1$ dimensional vector bundle over the base $SM$, irrelevant to vector fields. This bundle and the normal bundle $N=N(X)$ of a vector field $X$ over $M\setminus {\rm Sing}(X)$ are both abbreviated as $N$, which should not cause a confusion from the context. Define the {\it extended linear Poincar\'e flow } to be
$$\tilde\psi_t: N\to N$$
$$\tilde\psi_t(v)=\Phi_t(v)-\langle \Phi_t(v),\Phi_{t}^{\#}(e) \rangle\cdot \Phi_{t}^{\#}(e), \ \ \text{if} \ v\in N_e.$$
Thus $\tilde\psi_t$ covers the flow $\Phi_{t}^{\#}$ of $SM$, that is, $$\iota\circ \tilde\psi_t=\Phi_{t}^{\#}\circ \iota,$$
where $\iota: N_{SM}\to SM$ is the bundle projection.

If $e=X(x)/\|X(x)\|$ where $x\in M\setminus {\rm Sing}(X)$, then $N_e=N_x(X)$ and
 $$\Phi_{t}^{\#}(e)=X(\varphi_t(x))/\|X(\varphi_t(x))\|.$$ Hence
 $$\tilde\psi_t(v)=\Phi_t(v)-\langle \Phi_t(v), \frac{X(\varphi_t(x))}{\|X(\varphi_t(x))\|} \rangle\cdot \frac{X(\varphi_t(x))}{\|X(\varphi_t(x))\|}=\psi_t(v).$$  In other words, the extended linear Poincar\'e flow $\tilde\psi_t$ over the subset $\{X(x)/\|X(x)\|: x\in M\setminus {\rm Sing}(X)\}$ of $SM$ can be identified with the usual linear Poincar\'e flow $\psi_t$ over  $M\setminus {\rm Sing}(X)$.

 Let $\Lambda\subset M$ be a compact invariant set of $X$. Denote
 $$\tilde\Lambda=\overline{\{X(x)/\|X(x)\|: x\in \Lambda\setminus {\rm Sing}(X)\}}, $$ where the closure is taken in $SM$. The set $\tilde\Lambda$ is compact and $\Phi^{\#}_t$-invariant. Due to the parallel feature of vector fields near regular points, at every $x\in \Lambda\setminus {\rm Sing}(X)$, $\tilde\Lambda$ gives a single unit vector $X(x)/\|X(x)\|$. Thus in a sense $\tilde\Lambda$ is a ``compactification" of $\Lambda\setminus {\rm Sing}(X)$. At a singularity $x\in \Lambda\cap {\rm Sing}(X)$ however, $\tilde\Lambda$ usually gives a bunch of unit vectors.

Now we give the definition of multisingular hyperbolic set of Bonatti-da Luz \cite{BL1,BL2}. Let $\Lambda$ be a compact invariant set of $X$. A continuous function $h: \tilde\Lambda\times\mathbb{R}\to (0,+\infty)$ is called a  {\it cocycle} of $X$ on $\tilde\Lambda$ if  for any  $e\in \tilde\Lambda$ and any $s, t\in \mathbb R$,
$h(e, s+t)=h(e, s)\cdot h(\Phi_s^\#(e), t).$ We often write $h(e, t)$ as $h_t(e).$ Two most important examples of cocycles are $h_t(e)=\|\Phi_t(e)\|$ and $h_t(e)\equiv 1.$ A cocycle is called {\it pragmatical} with respect to a singularity $\sigma$ if there is an isolating neighborhood $U$ of $\sigma$ in $M$ such that if $e$ and $\Phi_t^\#(e)$ are both in $j^{-1}U$ then $h_t(e)=\|\Phi_t(e)\|$, and if $e$ and $\Phi_t^\#(e)$ are both outside $j^{-1}U$ then $h_t(e)=1$ (see \cite{BL1} for a good figure illustration). A {\it reparametrizing cocycle} is a (finite) product of pragmatical cocycles with disjoint isolating neighborhoods.

\begin{Definition}
{\rm Let $\Lambda$ be a compact invariant set of $X$. Let $C>1, ~ \lambda>0$ be given. We call $\Lambda$ a {\it $(C,\lambda)$-multisingular hyperbolic set} of $X$ if there is a $\tilde\psi_t$-invariant splitting $N_{\tilde\Lambda}=\Delta^s\oplus \Delta^u$ such that

(1) $\Delta^s\oplus \Delta^u$ is $(C,\lambda)$-dominated with respect to $\tilde\psi_t$, that is, $\|\tilde\psi_t|_{\Delta^s(e)}\|\cdot\|\tilde\psi_{-t}|_{\Delta^u(\Phi^\#_t(e))}\|<Ce^{-\lambda t}$
for any $e\in \tilde\Lambda$ and $t>0$.

(2) there is a reparametrizing  cocycle $h_t^s$ of $X$ on $\tilde\Lambda$ such that $\Delta^s$ is $(C,\lambda)$-contracting for $h_t^s\cdot \tilde\psi_t$, that is, $\|h_t^s(e)\cdot \tilde\psi_t(v)\|<Ce^{-\lambda t}\|v\|$ for any $e\in \tilde\Lambda$ and any $v\in \Delta^s(e)$ and $t>0$.

(3) there is a reparametrizing  cocycle $h_t^u$ of $X$ on $\tilde\Lambda$ such that $\Delta^u$ is $(C,\lambda)$-expanding for $h_t^u\cdot \tilde\psi_t$, that is, $\|h_{-t}^u(e)\cdot \tilde\psi_{-t}(v)\|<Ce^{-\lambda t}\|v\|$ for any $e\in \tilde\Lambda$ and any $v\in \Delta^u(e)$ and $t>0$.
}
\end{Definition}

The notion of multisingular hyperbolicity characterizes star flows. Recall a vector field $X$ is a {\it star system} if there is a neighborhood $\mathcal{U}$ of $X$ in $\mathcal{X}^1(M)$ such that, for every $Y\in\mathcal{U}$, every singularity and every periodic orbit of $Y$  is hyperbolic. Star flows play a fundamental role in the proof of the stability conjecture of Smale and Palis \cite{PS}, and the characterization of star flows has remained a remarkable problem for several decades. Bonatti-da Luz \cite{BL1, BL2} proved recently that a generic flow $X$ is a star flow if and only if every chain class of $X$ is multisingular hyperbolic. This solves generically the long standing problem of characterizing star flows.

 In this paper we investigate some aspects of multisingular hyperbolicity.
Here is a main result of this paper.

\vskip 0.2cm

\noindent {\bf Theorem A. } {\it
Let $\Lambda$ be a multisingular
hyperbolic set of a $C^1$ vector field $X$ on $M$. Then $\Lambda$
is rescaling expansive. In fact, there is $\epsilon_0>0$ such that for any $0<\epsilon\leq\epsilon_0$, any $x\in\Lambda$ and $y\in M$, and any increasing continuous functions $\theta: \mathbb R\to \mathbb R$, if
$d(\varphi_{\theta(t)}(y),\varphi_t(x))\le(\epsilon/3)\|X(\varphi_t(x))\|$ for all
$t\in\mathbb{R}$, then  $\varphi_{\theta(t)}(y)\in \varphi_{[-\epsilon, \epsilon]}(\varphi_t(x))$ for all $t\in \mathbb R$.
}

\vskip 0.2cm

Thus, for a multisingular hyperbolic set, the number $\delta$ in Definition 1.1 can be specified to be $\epsilon/3$.

On the other hand, a converse of Theorem A holds generically. That is, the rescaling expansiveness generically implies the multisingular hyperbolicity as stated in Theorem B below.

To state Theorem B we insert some definitions.  For $\delta>0$, we say that a sequence $\{(x_i,t_i):t_i\in M, t_i\geq 1\}_{a< i< b}$($-\infty\leq a<b\leq+\infty$) is a {\it $(\delta,1)$-pseudo orbit} or {\it $(\delta,1)$-chain} of $X$ if $d(\varphi_{t_i}(x_i),x_{i+1})<\delta$ for all $a<i<b-1$. Given $x,y\in M$, we say that $y$ is {\it chain attainable} from $x$ if for any $\delta>0$, there is a $(\delta,1)$-chain $\{(x_i,t_i)\}_{1\leq i\leq n}$, $n>1$,  such that $x_1=x, x_n=y$.  A compact invariant set $\Lambda$ is called {\it chain transitive} if every pair of points $x,y \in\Lambda$ are chain attainable from each other {\it through points of} $\Lambda$, that is, for any $\delta>0$, there are a $(\delta,1)$-chain $\{(x_i,t_i)\}_{1\leq i\leq n}$  with $x_1=x, x_n=y$ and a  $(\delta,1)$-chain $\{(y_i,t_i)\}_{1\leq i\leq n}$ with $y_1=y, y_n=x
$, where all $x_i,y_i$ are in $\Lambda$.

A compact invariant set $\Lambda$ of $X$ is called {\it isolated} if there is a neighborhood $U\subset M$ of $\Lambda$ such that
$$\Lambda=\bigcap_{t\in \mathbb{R}}\varphi_t(U).$$ An isolated invariant set $\Lambda$ is called {\it locally star} for $X$ if there are a neighborhood $\mathcal{U}$ of $X$ in $\mathcal{X}^1(M)$ and a neighborhood $U$ of $\Lambda$ in $M$ such that, for every $Y\in\mathcal{U}$, every singularity and every periodic orbit of $Y$ that is contained (entirely) in $U$ is hyperbolic.

Let $\mathcal{X}^1(M)$ be the space of $C^1$ vector fields endowed with the $C^1$ topology. A subset $\mathcal{R}\subset \mathcal{X}^1(M)$ is called  {\it residual} if it is an intersection of countably open and dense subset of $\mathcal{X}^1(M)$.
\vskip 0.2cm

\noindent {\bf Theorem B. } {\it
There is a residual set $\mathcal{R}\subset \mathcal{X}^1(M)$ such that for any $X\in\mathcal{R}$ and any isolated chain transitive set $\Lambda$, the following three conditions are equivalent:

\begin{enumerate}
\item $\Lambda$ is rescaling expansive for $X$.
\item $\Lambda$ is locally star for $X$.
\item $\Lambda$ is multisingular hyperbolic for $X$.

\end{enumerate}}

\vskip 0.2cm

 \section{Time-reparametrizations}

 A basic tool to what follows will be  a ``uniform relative" version of the classical flowbox theorem. We first work with a Euclidean space.

 Let $\bar{X}$ be a $C^1$ vector field on $\mathbb{R}^n$ with $\|D\bar X(x)||\leq L$ for all $x\in\mathbb{R}^n$, where $L>0$ is a constant, and $\bar{\varphi}_t$ be the flow generated by $\bar{X}$. Here we use the notations of $\bar X$ and $\bar \varphi_t$ with a bar just to distinguish from the notations of the vector filed $X$ and the flow $\varphi_t$ on the manifold $M$ below.

 For every regular point $x\in\mathbb{R}^n\setminus {\rm Sing}(\bar{X})$ and every $r>0$, denote by $$\bar{U}_x(r\|\bar X(x)\|)=\{v+t\bar{X}(x): v\in N_x, \|v\|\leq r\|\bar X(x)\|, |t|\leq r\}$$
the {\it tangent box of relative size} $r$ at $x$, where $N_x$ denotes the normal space to the span of $\bar X(x)$.

Note that the size of the tangent box $\bar{U}_x(r\|\bar X(x)\|)$  is $r\|\bar X(x)\|$ but not $r$, and this is why we have called $r$ the {\it relative size} of the box, that is, the size relative to the flow speed $\|\bar X(x)\|$.

Define the {\it flowbox map} $F_x$ of $\bar X$ at $x$ to be
 $$F_x:\bar{U}_x(r\|\bar X(x)\|)\to \mathbb{R}^n$$
$$F_x(v+t\bar X(x))=\bar{\varphi}_t(x+v).$$
Thus, for every $v\in N_x$ with $\|v\|\leq r\|\bar X(x)\|$, $F_x$ maps the line interval $v+[-r\|\bar X(x)\|, ~ r\|\bar X(x)\|]$ onto the orbital arc $\{\bar{\varphi}_t(x+v):~ |t|\le r\}$. Note that
$F_x(0)=x.$

  Recall $m(A)$ denotes the {\it mininorm} of a linear operator $A$, i.e., $$m(A)=\inf \{\|A(v)\|: v\in \mathbb R^n, ~ \|v\|=1\}.$$

\begin{Proposition}\label{flowbox1}
Let $\bar{X}$ be a $C^1$ vector filed on $\mathbb{R}^n$ such that $\|D\bar X(x)||\leq L$ for all $x\in\mathbb{R}^n$. There is $r_0>0$ such that, for any regular point $x$ of $\bar{X}$, $F_x:\bar{U}_x(r_0\|\bar X(x)\|)\to \mathbb{R}^n$ is an embedding whose image contains no singularities of $\bar X$, and $m(D_pF_x)\geq 1/2$ and
$\|D_pF_x\|\leq2$ for every $p\in\bar{U}_x(r_0\|\bar X(x)\|)$.

\end{Proposition}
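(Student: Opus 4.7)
The strategy is to show that, in the rescaled coordinates, $F_x$ is a small $C^1$-perturbation of the translation $p\mapsto x+p$, where the size of the perturbation is controlled only by the global Lipschitz constant $L$. First I would compute $D_pF_x$ at $p=v+t\bar X(x)$: applying the chain rule to $F_x(v+t\bar X(x))=\bar\varphi_t(x+v)$ gives
\[
D_pF_x(\delta v+\delta t\,\bar X(x))=D_{x+v}\bar\varphi_t(\delta v)+\bar X(\bar\varphi_t(x+v))\delta t.
\]
At $p=0$ this reduces to $\delta v+\delta t\,\bar X(x)$, so $D_0F_x$ is the identity on $T_x\mathbb R^n$; in particular, with the orthogonal decomposition $T_x\mathbb R^n=N_x\oplus\mathbb R\bar X(x)$, it is an isometry.

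Next I would quantify the deviation $D_pF_x-I$ by two standard Gronwall estimates made uniform in $x$ by the rescaling. From $\|D\bar X\|\le L$ one has $\|D_y\bar\varphi_s-I\|\le e^{L|s|}-1$. From the variation of constants one has $\|\bar\varphi_s(x)-x\|\le\|\bar X(x)\|(e^{L|s|}-1)/L$, and combining this with $\|\bar\varphi_s(x+v)-\bar\varphi_s(x)\|\le e^{L|s|}\|v\|$ gives
\[
\|\bar\varphi_t(x+v)-x\|\le\|\bar X(x)\|\,K(r),\qquad K(r):=re^{Lr}+(e^{Lr}-1)/L,
\]
whenever $|t|\le r$ and $\|v\|\le r\|\bar X(x)\|$. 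Hence $\|\bar X(\bar\varphi_t(x+v))-\bar X(x)\|\le L\,K(r)\,\|\bar X(x)\|$. Using $\|\delta v\|\le\|\delta u\|$ and $|\delta t|\,\|\bar X(x)\|\le\|\delta u\|$ (consequences of the orthogonality), these estimates assemble to $\|D_pF_x-I\|\le\eta(r,L):=(e^{Lr}-1)+L\,K(r)$, which tends to $0$ as $r\to 0$. Choosing $r_0=r_0(L)$ with $\eta(r_0,L)\le 1/2$ yields $\|D_pF_x\|\le 1+\eta\le 2$ and $m(D_pF_x)\ge 1-\eta\ge 1/2$; shrinking $r_0$ further if necessary so that also $L\,K(r_0)<1$ guarantees $\bar X\ne 0$ on the image.

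Finally, for the embedding claim, since the box is convex and $H(p):=F_x(p)-x-p$ has $\|DH\|\le\eta(r_0,L)\le 1/2$ throughout, the mean value inequality gives
\[
\|F_x(p)-F_x(q)\|\ge\|p-q\|-\|H(p)-H(q)\|\ge\tfrac12\|p-q\|,
\]
so $F_x$ is injective; combined with the derivative estimates this upgrades to a $C^1$-embedding. The main subtlety — and the reason the rescaling of the box by $\|\bar X(x)\|$ is essential — is that every Gronwall displacement scales linearly with $\|\bar X(x)\|$ once the time parameter and the normal part are measured relative to the local flow speed, so all error bounds depend only on $L$ and the relative size $r$, yielding a uniform $r_0$.
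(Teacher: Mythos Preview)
Your proof is correct and follows essentially the same approach as the paper: both arguments compute $D_pF_x$ in the orthogonal splitting $N_x\oplus\langle\bar X(x)\rangle$, bound $\|D_pF_x-I\|\le 1/2$ by Lipschitz/Gronwall estimates whose constants depend only on $L$ (the rescaling absorbing the factor $\|\bar X(x)\|$), and deduce injectivity from the fact that $F_x$ is a Lipschitz-$\tfrac12$ perturbation of the identity. The only cosmetic differences are that the paper fixes $r_0=1/(10L)$ explicitly and phrases injectivity via a contraction-mapping argument rather than the direct mean-value inequality you use.
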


We call  the image $F_x(\bar{U}_x(r_0\|\bar X(x)\|))$ a {\it flowbox} of $\bar X$ of {\it relative size} $r_0$ at $x$.  Proposition \ref{flowbox1} says that, although the set of regular points  of $\bar X$ is non-compact,  the  relative size $r_0$ of flowboxes  for all regular points  can be chosen uniform.

The  idea and the term of ``flowbox" are classical. See for instance Pugh-Robinson [PR] for the definition of flowbox. The ``uniform relative" version like Proposition \ref{flowbox1} is probably new.

\begin{proof}
Since $\sup\{\|D\bar{X}\|\}\leq L$,  the vector field $\bar{X}$ on $\mathbb R^n$ is Lipschitz with a Lipschitz constant $L$. Hence if $$\|y-x\|\leq\frac{1}{4L}\|\bar{X}(x)\|$$ then
$$\|\bar{X}(y)-\bar{X}(x)\|\leq L\|x-y\|\leq\frac{1}{4}\|\bar{X}(x)\|.\ \ \ \ \ \ \ \ \ \ \ \ \ \ \ \ \ \   (*)$$
In particular,  if $$y-x\in \bar{U}_x(\frac 1{4L}\|\bar X(x)\|)\cap N_x$$ then  $\bar{X}(y)\not=0$. This means that, for every regular point $x\in \mathbb R^n$, the flowbox
$F_x(\bar{U}_x(r_0\|\bar X(x)\|))$  contains no singularities if $r_0\le \frac 1 {4L}$.

\bigskip
{\noindent\bf Claim 1. } If $\|y-x\|\leq \frac{1}{8L}\|\bar{X}(x)\|$ and $|t|\leq\frac{1}{10L}$, then $\|\bar{\varphi}_t(y)-x\|\leq \frac{1}{4L}\|\bar{X}(x)\|$.

\bigskip
{\noindent\it Proof. }
Suppose for the contrary there are $y_0$ and $t_0$ with
$$\|y_0-x\|\leq\frac{1}{8L}\|\bar{X}(x)\|, ~~~ |t_0|\leq
\frac{1}{10L}$$ but
$$\|\bar{\varphi}_{t_0}(y_0)-x\|>\frac{1}{4L}\|\bar{X}(x)\|.$$ Without loss of
generality we assume $t_0>0$. Then there is $t_1$ with $0<t_1<t_0$ such
that $$\|\bar{\varphi}_{t_1}(y_0)-x\|=\frac{1}{4L}\|\bar{X}(x)\|$$
but
$$\|\bar{\varphi}_t(y_0)-x\|<\frac{1}{4L}\|\bar{X}(x)\|$$ for every $0<t<t_1$. By $(*)$,
 $$\|\bar{X}(\bar{\varphi}_t(y_0))\|\leq
\frac{5}{4}\|\bar{X}(x)\|$$ for all $0\leq t\leq t_1$. Then
$$\|\bar{\varphi}_{t_1}(y_0)-y_0\|=\|\int_{0}^{t_1}\bar{X}(\bar{\varphi}_t(y_0))dt\|\leq\int_{0}^{t_1}\|\bar{X}(\bar{\varphi}_t(y_0))\|dt$$
$$\leq\int_{0}^{t_1}\frac{5}{4}\|\bar{X}(x)\|dt=\frac{5}{4}\|\bar{X}(x)\|t_1<\frac{5}{4}\|\bar{X}(x)\|\frac{1}{10L}=\frac{1}{8L}\|\bar{X}(x)\|.$$
Here the strict inequality is guaranteed by
  $$t_1<t_0, ~ ~ t_0\le \frac 1{10L}.$$ Hence
$$\|\bar{\varphi}_{t_1}(y_0)-x\|\leq\|\bar{\varphi}_{t_1}(y_0)-y_0\|+\|y_0-x\|<\frac{1}{4L}\|\bar{X}(x)\|,$$
contradicting
$$\|\bar{\varphi}_{t_1}(y_0)-x\|=\frac{1}{4L}\|\bar{X}(x)\|.$$ This proves Claim 1.

\bigskip
Now let  $$r_0=\frac{1}{10L}.$$ We verify that $r_0$ satisfies the requirement of the proposition.

\bigskip
\nt {\bf Claim 2.} For any $p=v+t\bar{X}(x)\in \bar{U}_x(r_0\|\bar X(x)\|)$, $\|D_pF_x-id\|\le 1/2.$

\bigskip
{\noindent\it Proof.}  A straightforward computation of the directional derivative of $F_x$ at $p$ along the direction $\bar X(x)$ gives
$${D_pF_x} \cdot {\bar{X}(x)}=\bar{X}(F_x(p)).$$
Now
$|t|\leq\frac{1}{10L}$ and $\|v\|<\frac{1}{8L}\|\bar{X}(x)\|$ hence, by Claim 1,  $$\|F_x(p)-x\|=\|\bar{\varphi}_t(x+v)-x\|\leq\frac{1}{4L}\|\bar{X}(x)\|.$$
Since $\bar X$ has Lipschitz constant $L$, we have $$\|\bar{X}(F_x(p))-\bar{X}(x)\|\leq \frac{1}{4}\|\bar{X}(x)\|.$$ That is,
$$\|{D_pF_x} \cdot {\bar{X}(x)}-\bar{X}(x)\|\le \frac{1}{4}\|\bar{X}(x)\|.$$
Or
$$\|D_pF_x|_{<\bar{X}(x)>}-id\|\leq 1/4.$$

Likewise, for any $u\in N_x$, a straightforward computation of the directional derivative of $F_x$ at $p$ along the direction $u$ gives
$$D_pF_x\cdot u=D_{x+v}{\varphi}_t\cdot u.$$ Then
$$\|D_pF_x|_{N_x}-id\|\leq |e^{Lt}-1|\leq e^{1/10}-1<1/4.$$ Thus, for every $p\in \bar{U}_x(r_0\|\bar X(x)\|)$, $$\|D_pF_x-id\|\leq\|D_pF_x|_{<\bar{X}(x)>}-id\|+\|D_pF_x|_{N_x}-id\|<1/2.$$
This proves Claim 2.
\bigskip

In particular,  for every $p\in \bar{U}_x(r_0\|\bar X(x)\|)$, $D_pF_x$ is a linear isomorphism. By the inverse function theorem, $F_x$ is a local diffeomorphism. To prove that $F_x$ is an embedding it suffices to prove $F_x$ is injective, i.e., to prove that for any $z\in \mathbb R^n$ there is at most one $y\in \bar{U}_x(r_0\|\bar X(x)\|)$ such that $F_x(y)=z.$ By the generalized mean value theorem,
Claim 2 gives
 $${\rm Lip}(F_x-id)\le 1/2.$$  Then we can write $$F_x=id+\phi,$$ where $$\phi:\bar{U}_x(r_0\|\bar X(x)\|)\to \mathbb R^n$$ is Lipschitz with ${\rm Lip}(\phi)\le 1/2.$ We need to prove that, for any $z\in \mathbb R^n$, $F_x(y)=y+\phi(y)=z$ has at most one solution for $y$ or, equivalently,  $y=z-\phi(y)$ has at most one solution for $y$.
 Define $$T=T_z: \bar{U}_x(r_0\|\bar X(x)\|)\to \mathbb R^n$$ to be $$T(y)=z-\phi(y).$$ It suffices to prove that $T$ has at most one fixed point. It is sufficient to verify that $T$ is a contraction mapping. This is straightforward because
 $$\|T(y)-T(y')\|\le {\rm Lip}(\phi)\|y-y'\|\le \frac 12\|y-y'\|.$$ This proves that $F_x$ is an embedding. Clearly,
 $$m(D_pF_x)\geq 1/2, ~~ \|D_pF_x\|\leq2$$
 for every $p\in \bar{U}_x(r_0\|\bar X(x)\|)$. This ends the proof of Proposition \ref{flowbox1}.
\end{proof}

Now we come back to our manifold. As usual, denote
$$T_xM(r)=\{v\in T_xM: \|v\|\le r\},$$ $$B_r(x)=\exp_x(T_xM(r)),$$ $$N_x(r)=\{v\in N_x:\|v\|\le r\}.$$ By the compactness of $M$ and the $C^1$ smoothness of $X$,
there are constants $L>0$ and $a>0$ such that for any $x\in M$ the
vector fields
$$\bar{X}=(\exp_x^{-1})_*(X|_{B_{a}(x)})$$ in
$T_xM(a)$ are locally Lipschitz vector fields with Lipschitz constant $L$. We call $L$ a {\it local Lipschitz constant} of $X$. We may assume  $$m(D_p\exp_x)>2/3, ~ \|D_p\exp_x\|<3/2$$ for any $p\in T_xM(a)$. For every $x\in M\setminus {\rm Sing}(x)$, denote  $$U_x(r\|X(x)\|)=\{v+tX(x)\in T_xM: v\in N_x, \|v\|\leq r\|X(x)\|, |t|\leq r\}$$ the tangent box of relative size $r$ at $x$. Define a $C^1$ map  $$F_x:U_x(r\|X(x)\|)\to M$$ to be $$F_x(v+tX(x))=\varphi_t(\exp_x(v)).$$ Then Proposition \ref{flowbox} gives directly the following proposition whose proof is omitted.

\begin{Proposition}\label{flowbox}
For any $C^1$ vector field $X$ on $M$, there is $r_0>0$ such that for any regular point $x$ of ${X}$, $F_x:{U}_x(r_0\|X(x)\|)\to M$ is an embedding whose image contains no singularities of $X$, and $m(D_pF_x)\geq 1/3$ and
$\|D_pF_x\|\leq3$ for every $p\in {U}_x(r_0\|X(x)\|)$.

\end{Proposition}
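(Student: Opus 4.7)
The plan is to reduce the manifold statement to the Euclidean case, Proposition \ref{flowbox1}, by pulling the dynamics back through the exponential chart $\exp_x$. Fix a regular $x\in M$ and set $\bar X=(\exp_x^{-1})_*(X|_{B_a(x)})$ on $T_xM(a)$; by the standing hypotheses this is a Lipschitz vector field with constant $L$, and one may extend it to all of $T_xM$ with the same bound using a smooth cutoff (the compactness of $M$ and the bound $K:=\sup_M\|X\|<\infty$ make this cutoff uniform in $x$). Since $D_0\exp_x=\mathrm{id}$, we have $\bar X(0)=X(x)$, and the intertwining $\exp_x\circ\bar\varphi_t=\varphi_t\circ\exp_x$ holds wherever the relevant orbit segment stays inside $T_xM(a)$.

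The key observation is that the manifold flowbox map factors through the Euclidean one: writing $\bar F_0$ for the flowbox map of $\bar X$ at $0\in T_xM$, a direct computation yields
\[
F_x(v+tX(x))=\varphi_t(\exp_x v)=\exp_x\bigl(\bar\varphi_t v\bigr)=\exp_x\!\circ\bar F_0(v+tX(x)),
\]
on any common domain, and the tangent box $U_x(r\|X(x)\|)\subset T_xM$ coincides with $\bar U_0(r\|\bar X(0)\|)$ since $\bar X(0)=X(x)$. Let $r_0'$ be the constant furnished by Proposition \ref{flowbox1} applied to $\bar X$. I would then choose $r_0\le r_0'$ so small (depending only on $L$, $a$ and $K$) that Claim~1 from the proof of Proposition \ref{flowbox1} guarantees $\bar F_0\bigl(\bar U_0(r_0\|\bar X(0)\|)\bigr)\subset T_xM(a)$; this makes $\exp_x$ applicable on the image and the intertwining valid.

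With $F_x=\exp_x\circ\bar F_0$ on $U_x(r_0\|X(x)\|)$, the conclusion follows by combining the two sets of estimates. First, $F_x$ is a composition of embeddings and hence an embedding. Second, the chain rule together with $m(D\exp_x)>2/3$, $\|D\exp_x\|<3/2$ on $T_xM(a)$ and Proposition \ref{flowbox1}'s bounds $m(D\bar F_0)\ge 1/2$, $\|D\bar F_0\|\le 2$ yields
\[
m(D_pF_x)\;\ge\; \tfrac{2}{3}\cdot\tfrac{1}{2}\;=\;\tfrac{1}{3},\qquad \|D_pF_x\|\;\le\;\tfrac{3}{2}\cdot 2\;=\;3.
\]
Finally, a singularity of $X$ in $F_x(U_x(r_0\|X(x)\|))$ would pull back via $\exp_x$ to a singularity of $\bar X$ in the image of $\bar F_0$, contradicting Proposition \ref{flowbox1}. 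The only genuine obstacle, and it is minor, is extracting a single $r_0$ valid for \emph{every} regular $x$: this is where the uniformity of the Lipschitz constant $L$ over all exponential charts and the global bound $K$ on $\|X\|$ are needed, both of which are provided by the $C^1$ smoothness of $X$ and the compactness of $M$.
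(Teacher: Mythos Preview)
Your argument is correct and matches the paper's approach: the paper in fact omits the proof entirely, saying only that the proposition follows ``directly'' from Proposition~\ref{flowbox1}, with the constant $2$ replaced by $3$ ``because of the involvement of the exponential maps $\exp_x$''. Your factorization $F_x=\exp_x\circ\bar F_0$ and the chain-rule estimate $\tfrac{2}{3}\cdot\tfrac{1}{2}=\tfrac{1}{3}$, $\tfrac{3}{2}\cdot 2=3$ make this explicit, and your attention to keeping $\bar F_0(\bar U_0)$ inside $T_xM(a)$ (uniformly in $x$, via $K=\sup_M\|X\|$) fills in the only point the paper leaves implicit.
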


Here in the statement the constant 2 is  changed to 3 because of the involvement of the exponential maps ${\rm exp}_x$  in the proof.

Now we analyze the time-reparametrizations $\theta$ in the definition of the rescaled expansiveness. We will see that if $\delta$ is sufficiently small, the rescaled shadowing condition $$d(\varphi_{\theta(t)}(y),\varphi_t(x))\le\delta\|X(\varphi_t(x))\| ~~ {\rm for ~all} ~ t\in \mathbb R$$ of Definition 1.1 will force  $\theta$ to be nearly a translation.
The key to the proof is to control the time-difference $|t|$ by the distance $d(x,\varphi_t(x))$.
We know by continuity that if $|t|$ is small then  $d(x,\varphi_t(x))$ is small. The converse is not true if, for instance, $x$ is a singularity or $x$ is periodic and $t$ is  the period. Nevertheless in some situations a converse could be true. The next lemma states such a converse: in some situations $d(x,\varphi_t(x))\le \delta\|X(x)\|$ implies $|t|\le3\delta$. This will play a crucial role in the proof of Lemma \ref{proposition23}.

In what follows $r_0$ always denotes the constant given in Proposition \ref{flowbox}. Ideas  of Komuro [Kom2] are helpful to the rest part of this section.

\begin{Lemma} \label{lemma21}
Let  $x\in M\setminus {\rm Sing}(X)$ be given.

$(1)$  For any $0<\delta\le r_0/3$ and $t\in [-r_0, r_0]$,  $d(x,\varphi_t(x))\le \delta \|X(x)\|$ implies $|t|\le3\delta$.

$(2)$  For any $0<\delta\le r_0/3$, $\varphi_{[0,t]}(x)\subset B(x, \delta\|X(x)\|)$ implies $|t|\le 3\delta$.

\end{Lemma}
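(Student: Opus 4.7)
The plan is to transfer the question to Euclidean coordinates via the exponential chart at $x$ and then exploit the fact that, once the orbit $\bar{\varphi}_s(0)$ is trapped near $0$, its displacement is essentially parallel to $\bar{X}(0)$ with magnitude proportional to $|s|$.

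For part $(1)$, I would let $\bar{X} = (\exp_x^{-1})_* X$ on $T_xM(a)$, with local Lipschitz constant $L$ and flow $\bar{\varphi}_t$. Because $\|D\exp_x\|\le 3/2$ and $m(D\exp_x)\ge 2/3$, the hypothesis $d(x,\varphi_t(x))\le \delta\|X(x)\|$ transfers to $\|\bar{\varphi}_t(0)\|\le (3/2)\delta\|\bar{X}(0)\|$. Since $|t|\le r_0 = 1/(10L)$, Claim~1 in the proof of Proposition~\ref{flowbox1}, applied with $y_0 = 0$, yields the trapping estimate $\|\bar{\varphi}_s(0)\|\le \tfrac{1}{4L}\|\bar{X}(0)\|$ for every $s$ between $0$ and $t$. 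The Lipschitz bound on $\bar{X}$ then gives $\|\bar{X}(\bar{\varphi}_s(0)) - \bar{X}(0)\|\le \tfrac14\|\bar{X}(0)\|$ along the whole orbit, so $\langle \bar{X}(\bar{\varphi}_s(0)),\, \bar{X}(0)/\|\bar{X}(0)\|\rangle \ge \tfrac34\|\bar{X}(0)\|$. Integrating $\bar{\varphi}_t(0) = \int_0^t \bar{X}(\bar{\varphi}_s(0))\,ds$ against $\bar{X}(0)/\|\bar{X}(0)\|$ produces the lower bound $\|\bar{\varphi}_t(0)\| \ge \tfrac34|t|\,\|\bar{X}(0)\|$, and combining with the upper bound above yields $|t|\le 2\delta \le 3\delta$, as required.

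For part $(2)$, I would first rule out the possibility $|t| > r_0$: if that held, then the sub-arc $\varphi_{[0,r_0]}(x)$ (or $\varphi_{[-r_0,0]}(x)$) would still lie in $B(x,\delta\|X(x)\|)$, so part~$(1)$ applied at the endpoint at time $r_0$ would force $r_0\le 2\delta \le \tfrac23 r_0$, a contradiction since $\delta\le r_0/3$. Hence $|t|\le r_0$, and part~$(1)$ delivers $|t|\le 3\delta$ directly.

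The main obstacle, I expect, is not conceptual but bookkeeping: tracking the factor $3/2$ introduced by $\exp_x$, the factor $1/4$ coming from the Lipschitz variation of $\bar{X}$ along the orbit, and the threshold $\delta\le r_0/3$, so that every application of Claim~1 and of the inner-product lower bound remains inside the regime where the flowbox geometry of Proposition~\ref{flowbox} is valid. A small extra care is needed to justify that, for $d(x,\varphi_t(x))$ within $\delta\|X(x)\|$, the minimizing geodesic between $x$ and $\varphi_t(x)$ pulls back faithfully through $\exp_x^{-1}$, but this is ensured by the choice of $r_0$ and $a$.
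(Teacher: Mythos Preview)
Your argument is correct, but it takes a genuinely different route from the paper's. For part~(1) the paper uses the flowbox embedding $F_x$ of Proposition~\ref{flowbox} directly: since $F_x^{-1}(\varphi_t(x)) = tX(x)$ whenever $|t|\le r_0$, and since $\|DF_x^{-1}\|\le 3$, pulling back a minimizing geodesic from $x$ to $\varphi_t(x)$ produces a curve in the tangent box joining $0$ to $tX(x)$ of length at most $3d(x,\varphi_t(x))\le 3\delta\|X(x)\|$, which gives $|t|\le 3\delta$ in one line. You instead bypass $F_x$: working in exponential coordinates you integrate $\dot{\bar\varphi}_s=\bar X(\bar\varphi_s)$ against $\bar X(0)/\|\bar X(0)\|$ and use the Lipschitz estimate $\|\bar X(\bar\varphi_s(0))-\bar X(0)\|\le\tfrac14\|\bar X(0)\|$ to bound that projection below by $\tfrac34\|\bar X(0)\|$. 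This is more elementary and actually yields the sharper conclusion $|t|\le 2\delta$, which you then exploit in part~(2): applying it at $t=r_0$ gives $r_0\le 2\delta\le \tfrac23 r_0$, an immediate contradiction. The paper, having only the borderline bound $|t|\le 3\delta$, must argue part~(2) differently, showing via the flowbox geometry that $\varphi_s(x)$ exits $B(x,(r_0/3)\|X(x)\|)$ once $s$ passes $r_0$. So the paper's proof is shorter given the flowbox machinery already in hand, while yours is self-contained and delivers a better constant; the only care needed is the identification $r_0\le 1/(10L)$ so that Claim~1 applies, which is implicit in how Proposition~\ref{flowbox} is derived from Proposition~\ref{flowbox1}.
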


\begin{proof}
 (1) Since $F_x(0_x)=x$ and $m(D_pF_x)\geq 1/3$ and $\|D_pF_x\|\leq 3$ for every $p\in {U}_x(r_0\|X(x)\|)$,  we have $$F_x(U_x(r_0\|X(x)\|))\supset B(x, (r_0/3)\|X(x)\|).$$  Assume  $0<\delta\le r_0/3$, $t\in [-r_0, r_0]$, and $d(x,\varphi_t(x))\le \delta \|X(x)\|$. Take a geodesic $\gamma$ connecting $x$ and $\varphi_t(x)$. Then $\gamma\subset B(x, (r_0/3)\|X(x)\|)$. Since $t\in [-r_0, r_0]$, $F_x^{-1}(\varphi_t(x))=tX(x)$. Then  $F_x^{-1}(\gamma)$ is a curve in $U_x(r_0\|X(x)\|)$ connecting $0$ and $tX(x)$. Hence $$\|tX(x)\|\leq l(F_x^{-1}(\gamma))\leq 3l(\gamma)= 3d(x, \varphi_t(x))\le 3\delta\|X(x)\|.$$ Thus $|t|\le 3\delta$.

 (2) Assume $0<\delta\le r_0/3$ and $\varphi_{[0,t]}(x)\subset B(x, \delta\|X(x)\|)$. To prove $|t|\le 3\delta$, by (1), it suffices to verify $t\in [-r_0, r_0]$. Suppose $t\notin [-r_0, r_0]$. Without loss of generality suppose $t>r_0$. Take $s\in (r_0, t)$ slightly larger than $r_0$. Then $\varphi_{s}(x)\notin B(x, (r_0/3)\|X(x)\|)$, contradicting $\varphi_{[0,t]}(x)\subset B(x, \delta\|X(x)\|)$.  This  proves Lemma \ref{lemma21}.
\end{proof}
\vskip 0.2cm
\nt {\bf Remark}. In the proof of Lemma \ref{lemma21}, without the condition $t\in [-r_0, r_0]$, $F_x^{-1}(\varphi_t(x))$ may not be equal to $tX(x)$. For instance this is the case when $x$ is periodic and $t$ is the period of $x$.
\vskip 0.2cm
\begin{Lemma}\label{proposition23}
For any $\epsilon>0$ there is $\delta>0$ such that, for any $x\in M\setminus {\rm Sing}(X)$, any $y\in M$ and any $T\in[r_0/2,r_0]$, if there is an increasing continuous function $\theta:[0,T]\to\mathbb{R}$ such that $d(\varphi_t(x),\varphi_{\theta(t)}(y))\le\delta\|X(\varphi_t(x))\|$ for all $t\in[0,T]$, then $|\theta(T)-\theta(0)-T|\le\epsilon T$.
\end{Lemma}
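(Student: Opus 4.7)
The plan is to lift everything into the flowbox at the midpoint $x_m := \varphi_{T/2}(x)$ of the orbit arc, using Proposition \ref{flowbox}. Because $T \le r_0$, the arc $\{\varphi_t(x) : 0 \le t \le T\}$ pulls back to the centered segment $\{(t - T/2)X(x_m) : 0 \le t \le T\}$, which lies comfortably in the interior of $U_{x_m}(r_0\|X(x_m)\|)$, with at least $r_0/2$ of slack in the flow direction. Claim 1 in the proof of Proposition \ref{flowbox1} ensures that $\|X(\varphi_t(x))\|$ and $\|X(x_m)\|$ are comparable on $t \in [0, T]$ up to a uniform constant $C_1$. For $\delta$ sufficiently small, every shadow point $\varphi_{\theta(t)}(y)$ therefore lies in the flowbox, and we may write
\[
F_{x_m}^{-1}(\varphi_{\theta(t)}(y)) = v(t) + s(t)X(x_m), \qquad v(t) \in N_{x_m},\ s(t) \in \mathbb{R},
\]
with $v(t), s(t)$ continuous in $t$. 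Applying $\|DF_{x_m}^{-1}\| \le 3$ to the rescaled shadowing estimate and comparing with $F_{x_m}^{-1}(\varphi_t(x)) = (t - T/2)X(x_m)$ yields the pointwise bounds $\|v(t)\| \le 3C_1\delta\|X(x_m)\|$ and $|s(t) - (t - T/2)| \le 3C_1\delta$.

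The heart of the argument is to show that the continuous function $\tau(t) := \theta(t) - s(t)$ is in fact constant on $[0, T]$. Indeed, from $\varphi_{\theta(t)}(y) = \varphi_{s(t)}(\exp_{x_m}(v(t)))$ one reads off
\[
\varphi_{\tau(t)}(y) = \exp_{x_m}(v(t)) \in \Sigma := \exp_{x_m}(N_{x_m}),
\]
so the orbit of $y$ visits the local cross-section $\Sigma$ at each time $\tau(t)$. For $\delta$ small, $\|v(t)\|$ is tiny, so every $\exp_{x_m}(v(t))$ lies in a neighborhood of $x_m$ on which $X$ stays close to $X(x_m) \perp N_{x_m}$; hence $\Sigma$ is genuinely transverse to the flow there. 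Consequently, the set of times $s$ at which $\varphi_s(y) \in \Sigma$ near $x_m$ is a discrete subset of $\mathbb{R}$. Since $\tau$ is continuous with image in this discrete set, the image is connected and therefore a single point, so $\tau$ must be constant.

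Once $\tau$ is constant, $\theta(T) - \theta(0) = s(T) - s(0)$, and the pointwise bounds at $t = T$ and $t = 0$ give
\[
|\theta(T) - \theta(0) - T| = |(s(T) - T/2) - (s(0) + T/2)| \le 6 C_1 \delta.
\]
Choosing $\delta \le \epsilon r_0/(12 C_1)$ then yields $|\theta(T) - \theta(0) - T| \le \epsilon r_0/2 \le \epsilon T$, as required. The main obstacle is the constancy of $\tau$: one must exploit the rescaled shadowing hypothesis to keep the auxiliary curve $t \mapsto \exp_{x_m}(v(t))$ inside the region of genuine transversality, and then convert this transversality into the topological fact that a continuous map from the connected interval $[0, T]$ into a discrete crossing set must collapse to a single value. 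Everything else is routine bookkeeping through the flowbox coordinates, and the hypothesis $T \le r_0$ is what allows a single flowbox (at the midpoint) to contain the whole picture.
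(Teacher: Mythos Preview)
Your argument is correct and takes a genuinely different route from the paper's proof. The paper does not work in a single flowbox at the midpoint; instead it normalises $\theta(0)=0$, splits into the two cases $\theta(T)\le T$ and $\theta(T)>T$, and in each case estimates the distance $d(\varphi_{\theta(T)}(y),\varphi_T(y))$ (respectively $d(\varphi_T(x),\varphi_S(x))$ with $\theta(S)=T$) by chaining the rescaled shadowing inequality with the Gronwall-type bounds $d(\varphi_t(x),\varphi_t(y))\le e^{L|t|}d(x,y)$ and $\|X(\varphi_t(x))\|\le e^{L|t|}\|X(x)\|$. It then invokes Lemma~\ref{lemma21} to convert this distance bound on a short orbit arc back into a time bound $|\theta(T)-T|$. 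Your approach replaces this case analysis and distance-to-time conversion by a single topological observation: once everything is pulled into the flowbox at $x_m$, the function $\tau(t)=\theta(t)-s(t)$ records when $\varphi_\cdot(y)$ hits the local transversal $\exp_{x_m}(N_{x_m})$, and the transversality of $X$ to this section (guaranteed because $\|v(t)\|\le 3C_1\delta\|X(x_m)\|$ keeps all crossings near $x_m$) forces the crossing-time set to be discrete, so the continuous $\tau$ collapses to a constant. This is cleaner and more conceptual, avoiding the case split entirely; the paper's route is more hands-on, produces explicit constants directly, and does not need the transversality/discreteness step, relying only on the elementary Lemma~\ref{lemma21}.
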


\begin{proof}

Let $L$ be a local Lipschitz constant given in the paragraph right before Proposition \ref{flowbox}. First we recall two formulas from ODE about the continuous dependence of solutions with respect to initial conditions:

(1) For any $x\in M\setminus {\rm Sing}(X)$ and $t\in \mathbb{R}$, $$\frac{\|X(\varphi_t(x))\|}{\|X(x)\|}\in[e^{-L|t|}, e^{L|t|}].$$

(2) $d(\varphi_t(x),\varphi_t(y))\le e^{L|t|}d(x, y).$

We also fix a fact that can be proved like the inequality $(*)$ in the proof of Proposition 2.1:

\bigskip
\nt {\bf Fact.} {\it There is $c>0$ such that for any $z,z'\in M\setminus {\rm Sing}(X)$, if $d(z,z')<c\|X(z)\|$ then
$(1/2)\|X(z)\|<\|X(z')\|<2\|X(z)\|.$}

\bigskip
Now let $\epsilon>0$ be given. Let
$$\delta=\min\{\frac{r_0}{6e^{2Lr_0}}, ~ \frac{c}{18e^{2Lr_0}}, ~ \frac{\epsilon r_0}{12(3+18e^{2Lr_0})}\}.$$ Here the three expressions are just some rough estimates that will work.

Assume we are given $x\in M\setminus {\rm Sing}(X)$, $y\in M$, $T\in[r_0/2,r_0]$ and an increasing continuous function $\theta:[0,T]\to \mathbb{R}$ such that $$d(\varphi_t(x),\varphi_{\theta(t)}(y))\le\delta\|X(\varphi_t(x))\|$$ for all $t\in[0,T]$. We prove $|\theta(T)-\theta(0)-T|\le\epsilon T.$ Replacing $\theta$ by $\eta$ with $\eta(t)=\theta(t)-\theta(0)$ if necessary, we  assume $\theta(0)=0$.
Thus we prove $$|\theta(T)-T|\le\epsilon T.$$
Note that $\theta(0)=0$ implies $$d(x, y)\le \delta\|X(x)\|.$$
Most of the proofs will be to estimate the distance $d(\varphi_{\theta(T)}(y),\varphi_T(y))$. At last we will convert it to the time-difference $|\theta(T)-T|$, using Lemma  \ref{lemma21}.

First assume $\theta(T)\leq T$. Then
$$d(\varphi_{\theta(T)}(y),\varphi_T(y))\leq d(\varphi_{\theta(T)}(y),\varphi_T(x))+d(\varphi_T(x),\varphi_T(y))$$
$$\leq \delta\|X(\varphi_T(x))\|+e^{LT}\delta\|X(x)\|$$
$$\leq \delta\|X(\varphi_T(x))\|+e^{2LT}\delta\|X(\varphi_T(x))\|$$
$$=(1+e^{2LT})\delta\|X(\varphi_T(x))\|.$$
Since $$d(\varphi_T(x),\varphi_T(y))\le e^{2LT}\delta\|X(\varphi_T(x))\|,$$ and since  $$e^{2LT}\delta\leq c$$ (by the choice of $\delta$), by the above Fact,  $$\|X(\varphi_T(y))\|>1/2\|X(\varphi_T(x))\|.$$ Then
$$d(\varphi_{\theta(T)}(y),\varphi_T(y))\le 2(1+e^{2LT})\delta\|X(\varphi_T(y))\|.$$
Since $T\in[r_0/2,r_0]$, and since $\theta$ is increasing, $\theta(0)=0$, and $\theta(T)\leq T$, we have $|\theta(T)-T|\in [-r_0, r_0]$. By the choice of $\delta$, $$2(1+e^{2LT})\delta\le\frac{\epsilon r_0}{6}.$$ Then by the first part of Lemma \ref{lemma21}, $$|\theta(T)-T|\le\frac{\epsilon r_0}{2}\leq \epsilon T.$$

Now assume $\theta(T)>T$. There is $0\leq S\leq T$ such that $\theta(S)=T$. Then
$$d(\varphi_T(x),\varphi_{S}(x))\leq d(\varphi_T(x),\varphi_T(y))+d(\varphi_T(y),\varphi_{S}(x))$$
$$\leq e^{2LT}\delta\|X(\varphi_T(x))\|+\delta\|X(\varphi_{S}(x))\|$$
$$\le e^{2LT}\delta\|X(\varphi_T(x))\|+e^{LT}\delta\|X(\varphi_T(x))\|$$
$$=(e^{2LT}+e^{LT})\delta\|X(\varphi_T(x))\|.$$
Here we have used the fact $$\|X(\varphi_S(x))\|\le e^{L(T-S)}\|X(\varphi_{T}(x))\|\leq e^{LT}\|X(\varphi_T(x))\|.$$
By the choice of $\delta$, $$(e^{2LT}+e^{LT})\delta\le r_0/3.$$ Then by the first part of Lemma \ref{lemma21}, $$0\leq T-S\le 3(e^{2LT}+e^{LT})\delta\le 6e^{2LT}\delta.$$
Here we replace $3(e^{2LT}+e^{LT})$ by $6e^{2LT}$ just to shorten the expression. Note that by the choice of $\delta$, $$6e^{2LT}\delta\le r_0.$$    Consider the flowbox $U_{\varphi_T(x)}(r_0\|X(\varphi_T(x))\|)$ around $\varphi_T(x)$. By the definition of the flowbox map and Proposition \ref{flowbox}, for every $s\in [S,T]$, $$d(\varphi_s(x),\varphi_T(x))\le 3(T-s)\|X(\varphi_T(x))\|$$ $$\leq 3(T-S)\|X(\varphi_T(x))\|\le 18e^{2LT}\delta\|X(\varphi_T(x))\|.$$
Now for every $t\in[T,\theta(T)]$, take $s\in[S,T]$ such that $\theta(s)=t$. Then
$$d(\varphi_{\theta(T)}(y),\varphi_t(y))\leq d(\varphi_{\theta(T)}(y),\varphi_T(x))+d(\varphi_T(x),\varphi_{s}(x))+d(\varphi_{s}(x),\varphi_t(y))$$
$$\le\delta\|X(\varphi_T(x))\|+18e^{2LT}\delta\|X(\varphi_T(x))\|+\delta\|X(\varphi_{s}(x))\|.$$
By the choice of $\delta$, $$18e^{2LT}\delta\le c.$$ Then $\|X(\varphi_{s}(x))\|\le 2\|X(\varphi_T(x))\|.$ Therefore, for every $t\in[T,\theta(T)]$,
$$d(\varphi_{\theta(T)}(y),\varphi_t(y))\le(1+18e^{2LT})\delta\|X(\varphi_T(x))\|+2\delta\|X(\varphi_T(x))\|$$
$$=(3+18e^{2LT})\delta\|X(\varphi_T(x))\|$$
$$\le2(3+18e^{2LT})\delta\|X(\varphi_{\theta(T)}(y))\|.$$
Here we have used the fact  $d(\varphi_{\theta(T)}(y),\varphi_T(x))\le \delta\|X(\varphi_T(x))\|$ and hence the fact $\|X(\varphi_{\theta(T)}(y))\|\ge(1/2)\|X(\varphi_T(x))\|.$
Now by the choice of $\delta$, $$2(3+18e^{2LT})\delta\le \frac{\epsilon r_0}{6}.$$ Then by the second part of Lemma \ref{lemma21}, $$|\theta(T)-T|\le\frac{\epsilon r_0}{2}\leq \epsilon T.$$ This ends the proof of Lemma \ref{proposition23}.
\end{proof}

\begin{Lemma}\label{lemma23}
For any $\epsilon>0$ there is $\delta>0$  such that,
for any $x\in M\setminus {\rm Sing}(X)$, any $y\in M$, and any $T\geq r_0$, if there
is an increasing continuous function $\theta:[0,T]\to\mathbb{R}$  such that
$d(\varphi_t(x),\varphi_{\theta(t)}(y))\le\delta\|X(\varphi_t(x))\|$ for
all $t\in[0,T]$, then $|\theta(T)-\theta(0)-T|\le\epsilon T$.
\end{Lemma}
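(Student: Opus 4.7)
The plan is to reduce Lemma~\ref{lemma23} to repeated applications of the ``short-interval'' version, namely Lemma~\ref{proposition23}, by partitioning $[0,T]$ into consecutive subintervals whose lengths all lie in $[r_0/2, r_0]$ and summing the estimates.

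Given $\epsilon>0$, I would first pick $\delta>0$ to be the $\delta$ supplied by Lemma~\ref{proposition23} applied to this same $\epsilon$. Now let $T\geq r_0$ and suppose an increasing continuous $\theta:[0,T]\to\mathbb{R}$ satisfies $d(\varphi_t(x),\varphi_{\theta(t)}(y))\le\delta\|X(\varphi_t(x))\|$ for all $t\in[0,T]$. Choose the unique integer $n\geq 1$ with $T/n\in[r_0/2,\,r_0]$ (explicitly, $n=\lceil T/r_0\rceil$; the bound $T/n>r_0/2$ for $n\geq 2$ follows because otherwise $T\leq (n-1)r_0$ would contradict minimality of $n$). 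Set $t_i=iT/n$ for $0\leq i\leq n$, so the subintervals $[t_i,t_{i+1}]$ each have length in $[r_0/2,r_0]$.

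On each piece, I would localize by setting $x_i=\varphi_{t_i}(x)$ (regular, since regularity is flow-invariant) and $y_i=\varphi_{\theta(t_i)}(y)$, and $\theta_i(s)=\theta(t_i+s)-\theta(t_i)$ for $s\in[0,T/n]$. The hypothesis $d(\varphi_t(x),\varphi_{\theta(t)}(y))\le\delta\|X(\varphi_t(x))\|$ restricted to $t=t_i+s$ translates exactly into
$$d(\varphi_s(x_i),\varphi_{\theta_i(s)}(y_i))\le \delta\|X(\varphi_s(x_i))\|$$
for $s\in[0,T/n]$. Since $T/n\in[r_0/2,r_0]$, Lemma~\ref{proposition23} applies and yields
$$|\theta_i(T/n)-\theta_i(0)-T/n|\le \epsilon\,(T/n),$$
i.e.\ $|\theta(t_{i+1})-\theta(t_i)-(t_{i+1}-t_i)|\le \epsilon(t_{i+1}-t_i)$.

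Finally, summing the telescoping differences and using the triangle inequality,
$$|\theta(T)-\theta(0)-T|\le \sum_{i=0}^{n-1}|\theta(t_{i+1})-\theta(t_i)-(t_{i+1}-t_i)|\le \sum_{i=0}^{n-1}\epsilon(t_{i+1}-t_i)=\epsilon T,$$
which is the desired conclusion. There is no real obstacle here: the only point to be careful about is the choice of $n$ so that the subinterval lengths land inside the admissible window $[r_0/2,r_0]$ required by Lemma~\ref{proposition23}; everything else is a bookkeeping step using that the reparametrized shadowing hypothesis restricts trivially to each subinterval.
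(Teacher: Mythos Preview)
Your proof is correct and follows essentially the same approach as the paper: partition $[0,T]$ into subintervals of length in $[r_0/2,r_0]$, apply Lemma~\ref{proposition23} on each piece, and telescope. Your version is slightly more explicit (equal-length subintervals and an explicit localization $x_i,y_i,\theta_i$), but the argument is the same.
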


\begin{proof}
This is a corollary of Lemma \ref{proposition23}. We prove the case $T\ge r_0$.
Divide $[0,T]$ into several intervals as
$$0=T_0<T_1<\cdots<T_{n-1}<T_n=T$$
with $r_0/2\leq T_i-T_{i-1}<r_0$. Then we choose $\delta>0$ by
Lemma \ref{proposition23}. Let $\theta:[0,T]\to\mathbb{R}$ be an increasing continuous function such that
$$d(\varphi_t(x),\varphi_{\theta(t)}(y))\le\delta\|X(\varphi_t(x))\|$$ for
all $t\in[0,T]$. Without loss of generality we assume
$\theta(0)=0$. Then for any $1\leq i\leq n$,
$$|\theta(T_i)-\theta(T_{i-1})-(T_i-T_{i-1})|\le\epsilon(T_i-T_{i-1}).$$ Hence
we have  $|\theta(T)-T|\le\epsilon T$. This ends the proof of Lemma \ref{lemma23}.
\end{proof}

\begin{Corollary}\label{cor} There is $\delta>0$  such that,
for any $x\in M\setminus {\rm Sing}(X)$ and any $y\in M$, if there
is an increasing continuous function $\theta:\mathbb R\to\mathbb{R}$  such that
$d(\varphi_t(x),\varphi_{\theta(t)}(y))\le\delta\|X(\varphi_t(x))\|$ for
all $t\in \mathbb R$, then $\theta$ is surjective.
\end{Corollary}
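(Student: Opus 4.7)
First, I would take $\delta$ from Lemma \ref{lemma23} applied with $\epsilon = 1/2$ (any $\epsilon < 1$ would do), and then propagate the conclusion of that lemma across every time-window of length at least $r_0$ by translating the base point. Concretely, given $x \in M \setminus \mathrm{Sing}(X)$, $y \in M$ and $\theta : \mathbb{R} \to \mathbb{R}$ satisfying the hypothesis, for each $t_0 \in \mathbb{R}$ the point $\varphi_{t_0}(x)$ is again regular (an orbit of a regular point never hits a singularity), and $\bar\theta(t) := \theta(t_0+t) - \theta(t_0)$ is an increasing continuous function that shadows $\varphi_{t_0}(x)$ via $\varphi_{\theta(t_0)}(y)$ with the same rescaled error $\delta$. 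Lemma \ref{lemma23} applied on $[0,T]$ for any $T \geq r_0$ therefore yields
$$|\theta(t_0+T) - \theta(t_0) - T| \leq T/2,$$
and in particular the one-sided bound $\theta(t_0+T) - \theta(t_0) \geq T/2$.

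Next, to deduce surjectivity, I would exploit the two natural limits. Since $\theta$ is increasing and continuous, its image is an interval whose endpoints are $\lim_{t\to-\infty}\theta(t)$ and $\lim_{t\to+\infty}\theta(t)$. Setting $t_0 = 0$ and letting $T \to +\infty$ in the bound gives $\theta(T) \geq \theta(0) + T/2 \to +\infty$. For the opposite end I would fix any $s_0 \in \mathbb{R}$ and, for each $t \leq s_0 - r_0$, apply the bound with $t_0 = t$ and $T = s_0 - t \geq r_0$ to obtain $\theta(t) \leq \theta(s_0) - (s_0 - t)/2$, which tends to $-\infty$ as $t \to -\infty$. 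Hence $\theta(\mathbb{R}) = \mathbb{R}$, which is exactly surjectivity.

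There is no real obstacle here: the corollary is essentially a packaging of Lemma \ref{lemma23}, and the only point requiring attention is that translating the base point by $\varphi_{t_0}$ keeps us in the hypothesis of that lemma, which holds because the forward and backward orbit of a regular point contains no singularity. All the genuine technical work has already been absorbed by Lemma \ref{lemma23}.
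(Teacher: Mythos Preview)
Your proof is correct and follows essentially the same route as the paper: fix any $\epsilon<1$, take the corresponding $\delta$ from Lemma~\ref{lemma23}, and use the resulting inequality $|\theta(T)-\theta(0)-T|\le \epsilon T$ (together with its time-shifted version for the negative direction) to force $\theta(t)\to\pm\infty$ and hence surjectivity. The paper handles the negative side with a one-word ``symmetrically,'' whereas you make the translation $t_0\mapsto \varphi_{t_0}(x)$ explicit, but the underlying argument is identical.
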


\begin{proof} Fix $0<\epsilon<1$. Let $\delta>0$ be the corresponding number guaranteed by Lemma \ref{lemma23}. Assume we are given $x\in M\setminus {\rm Sing}(X)$, $y\in M$, and an increasing continuous function $\theta:\mathbb R\to\mathbb{R}$  such that
$d(\varphi_t(x),\varphi_{\theta(t)}(y))\le\delta\|X(\varphi_t(x))\|$ for
all $t\in \mathbb R$. By Lemma \ref{lemma23},  for any $T\ge r_0$, $|\theta(T)-\theta(0)-T|\le\epsilon T$. Then $\theta(T)\to \infty$ as $T\to\infty$. Symmetrically, the same argument proves $\theta(T)\to -\infty$ as $T\to-\infty$. Thus
$\theta:\mathbb R\to \mathbb R$ is surjective.
\end{proof}

\section{Sectional Poincar\'e maps}

 In this section we discuss the sectional Poincar\'e maps. Let $X$ be a $C^1$ vector field on $M$ as before. By Proposition \ref{flowbox}, the flowbox $F_x(U_x(r_0\|X(x)\|))$ contains a ball of radius $\frac{r_0}{3}\|X(x)\|$ centered at $x$. For any $y\in B_{\frac{r_0}{3}\|X(x)\|}(x)$, if $y=F_x(v,t)$, then define $P_x(y)=v$. In other words, define a map
 $$P_x: B_{\frac{r_0}{3}\|X(x)\|}(x)\to N_x$$ to be
$$P_x=\pi_x\circ F_x^{-1},$$ where $\pi_x$ denotes the orthogonal projection of $T_x M$ to $N_x$.  Since $\pi_x$ has norm $\le 1$, we have  $$\|DP_x|_{B_{\frac{r_0}{3}\|X(x)\|}(x)}\|\leq 3.$$

For any $t\in\mathbb{R}$, let $$r_1=r_1(t)=e^{-2L|t|}\frac{r_0}{3},$$ where $L$ is chosen as in the previous paragraphs with the property $\sup\{\|DX\|\}<L$. Since $$\|X(\varphi_t(x))\|\le e^{Lt}\|X(x)\|,$$ we have $$\varphi_t(B_{r_1\|X(x)\|}(x))\subset B_{e^{L|t|}r_1\|X(x)\|}(\varphi_t(x))$$$$\subset B_{e^{2L|t|}r_1\|X(\varphi_t(x))\|}(\varphi_t(x))=B_{\frac{r_0}{3}\|X(\varphi_t(x))\|}(\varphi_t(x)).$$
Hence for any $t\in \mathbb{R}$, we can define a map
$$P_{x,t}:N_x(r_1\|X(x)\|)\to N_{\varphi_t(x)}$$ to be $$P_{x,t}=P_{\varphi_t(x)}\circ\varphi_t\circ\exp_x,$$  called the {\it sectional Poincar\'e map }at $x$ of time $t$ (\cite{GY}). Note that $P_{x,t}$ and  $P_x$ are  different maps.

The sectional Poincar\'e map $P_{x,t}$ is defined in Gan-Yang \cite{GY} using holonomy maps generated by orbit arcs. The definition here using flowboxes is equivalent but formally slightly different.

The following proposition presents some uniform ( in a relative sense) property about the family of the derivatives  $D_vP_{x,T}$ of the sectional Poincar\'e maps $P_{x,T}$ at $v\in N_x(r_1\|X(x)\|)$. Note that, at the origin $0_x$ of $N_x$,
$$D_{0_x}P_{x,T}=\psi_T|_{N_x}.$$

\begin{Proposition}\label{prop3.1}\cite{GY}
The family of sectional Poincar\'e maps $\{P_{x,T}\}$ has the following
properties:
\begin{enumerate}
\item $\|D_vP_{x,T}\|$ is uniformly bounded in the following sense: for any $T\in\mathbb{R}$, there is $K>0$ such that $\|D_vP_{x,T}\|\leq K$ for any $x\in M\setminus {\rm Sing}(X)$ and any $v\in N_x(r_1\|X(x)\|)$.
\item $D_vP_{x,T}$ is uniformly continuous in the following sense: Given $T\in\mathbb{R}$, for any $\epsilon>0$ there is $\delta>0$ such that for any $x\in M\setminus {\rm Sing}(X)$ and any $v,v'\in N_x(r_1\|X(x)\|)$ with $\|v-v'\|<\delta\|X(x)\|$, $\|D_vP_{x,T}-D_{v'}P_{x,T}\|<\epsilon$.
\end{enumerate}
\end{Proposition}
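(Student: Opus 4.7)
By definition $P_{x,T}=P_{\varphi_T(x)}\circ\varphi_T\circ\exp_x$, so the chain rule gives
\begin{equation*}
D_vP_{x,T}=A_v\cdot B_v\cdot C_v,
\end{equation*}
where $C_v=D_v\exp_x$, $B_v=D_{\exp_x(v)}\varphi_T$, and $A_v=D_{\varphi_T(\exp_x(v))}P_{\varphi_T(x)}$. Part (1) then falls out of a factor-by-factor bound: $\|C_v\|\leq 3/2$ by the standing assumption on $\exp$; $\|B_v\|\leq e^{L|T|}$ by Gronwall applied to the variational equation, using $\|DX\|\leq L$; and $\|A_v\|\leq 3$ by the estimate on $DP_y$ recorded just before the proposition. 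Multiplying gives the uniform bound $K=(9/2)e^{L|T|}$.

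For part (2) I would expand
\begin{equation*}
D_vP_{x,T}-D_{v'}P_{x,T}=(A_v-A_{v'})B_vC_v+A_{v'}(B_v-B_{v'})C_v+A_{v'}B_{v'}(C_v-C_{v'}),
\end{equation*}
so by part (1) it suffices to drive each of $\|C_v-C_{v'}\|$, $\|B_v-B_{v'}\|$, $\|A_v-A_{v'}\|$ below a threshold of order $\epsilon/K$. The first two are routine: since $\max_M\|X\|$ is finite, $\|v-v'\|<\delta\|X(x)\|$ forces $\|v-v'\|$ and hence $d(\exp_x(v),\exp_x(v'))$ to be small in absolute terms, and the uniform continuity of $D\exp$ and of $z\mapsto D_z\varphi_T$ on the compact manifold $M$ (using $X\in C^1$) yields moduli of continuity that shrink with $\delta$.

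The main obstacle is the $A$-term, because $y=\varphi_T(x)$ may lie near a singularity, where the natural spatial scale is $\|X(y)\|$ rather than an absolute constant. My plan is to extract a sub-lemma: for every $\eta>0$ there is $\delta_0>0$ such that whenever $y\in M\setminus{\rm Sing}(X)$ and $p,p'\in B_{(r_0/3)\|X(y)\|}(y)$ with $d(p,p')<\delta_0\|X(y)\|$, one has $\|D_pP_y-D_{p'}P_y\|<\eta$. Granted this, the chain of estimates
\begin{equation*}
d(\varphi_T(\exp_x(v)),\varphi_T(\exp_x(v')))\leq(3/2)e^{L|T|}\|v-v'\|\leq(3/2)e^{2L|T|}\delta\|X(\varphi_T(x))\|,
\end{equation*}
together with $\|X(x)\|\leq e^{L|T|}\|X(\varphi_T(x))\|$, makes the sub-lemma's hypothesis hold when $\delta$ is small, giving $\|A_v-A_{v'}\|<\eta$.

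To prove the sub-lemma, write $P_y=\pi_y\circ F_y^{-1}$, so $D_pP_y=\pi_y\circ(D_{F_y^{-1}(p)}F_y)^{-1}$. Via the resolvent identity $L_1^{-1}-L_2^{-1}=L_1^{-1}(L_2-L_1)L_2^{-1}$ and the bound $\|(D_qF_y)^{-1}\|\leq 3$ from Proposition \ref{flowbox}, the task reduces to showing $\|D_qF_y-D_{q'}F_y\|$ is small when $\|q-q'\|$ is small relative to $\|X(y)\|$. Differentiating the explicit form $F_y(v+sX(y))=\varphi_s(\exp_y(v))$ exactly as in Claim~2 of Proposition \ref{flowbox1} gives $D_qF_y\cdot u=D_{\exp_y(v)}\varphi_s\cdot D_v\exp_y\cdot u$ for $u\in N_y$ and $D_qF_y\cdot X(y)=X(F_y(q))$. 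Decomposing $q=v+sX(y)$ and $q'=v'+s'X(y)$ orthogonally (since $v,v'\in N_y$ are orthogonal to $X(y)$), we get $\|v-v'\|\leq\|q-q'\|$ and $|s-s'|\leq\|q-q'\|/\|X(y)\|$; combined with the bilipschitz bound $\|q-q'\|\leq 3d(F_y(q),F_y(q'))$, this keeps $|s-s'|\leq 3\delta_0$ small uniformly in $y$, while $\|v-v'\|\leq 3\delta_0\max_M\|X\|$ is small in absolute terms. The uniform continuity of $X$, $D\exp$, and $(z,s)\mapsto D_z\varphi_s$ on the compact set $M\times[-r_0,r_0]$ then controls $\|D_qF_y-D_{q'}F_y\|$ as required, proving the sub-lemma and hence part (2).
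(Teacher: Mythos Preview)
Your argument is correct and follows essentially the same route as the paper. Part~(1) is identical, down to the constant $K=(9/2)e^{L|T|}$. For part~(2) the paper uses the same three-term telescoping of $D_vP_{x,T}-D_{v'}P_{x,T}$, proves the same sub-lemma (stated there as a Claim about $\|D_pF_y-D_qF_y\|$ being small when $\|p-q\|<\delta\|X(y)\|$), reduces $DP_y$ to $DF_y^{-1}$ via the same resolvent identity, and splits the flowbox derivative into the $\langle X(y)\rangle$ and $N_y$ components exactly as you do. The only cosmetic differences are that the paper carries out the $DF_y$ estimate in the Euclidean chart and writes out an explicit Gronwall-type bound $\|Id-D_z\varphi_{s-s'}\|\leq e^{L|s-s'|}-1$ for the time variation, whereas you appeal directly to the joint uniform continuity of $(z,s)\mapsto D_z\varphi_s$ on $M\times[-r_0,r_0]$; both are valid.
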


Here $x$ ranges over the non-compact set $M\setminus {\rm Sing}(X)$ and $v$ ranges over a neighborhood of $0_x$ in $N_x$ of uniform relative size $r_1=r_1(T)$. This proposition extracts some old ideas from the work of Liao [L1, L2] and plays an important tool in the recent work of Gan-Yang \cite{GY}. See also \cite{HW}, \cite{SGW} and \cite{Y} for some relevant applications. Since our definition of sectional Poincar\'e maps $\{P_{x,T}\}$ is formally slightly different from the one given originally in \cite{GY}, we give a sketch of the proof for Proposition \ref{prop3.1}.

\begin{proof}
The proof for (1) is immediate because
$$\|D_vP_{x,T}\|=\|D_v(P_{\varphi_T(x)}\circ\varphi_T\circ\exp_x)\|$$
$$\leq\|D_{\varphi_T(\exp_x(v))}P_{\varphi_T(x)}\|\cdot\|D_{\exp_x(v)}\varphi_T\|\cdot\|D_v\exp_x\|$$
$$\leq 3\cdot e^{L|T|}\cdot\frac{3}{2}=\frac{9}{2}e^{L|T|}$$
for any $v\in N_x(r_1\|X(x)\|)$.

The key to the proof of (2) is that the derivatives of the flowbox map $F_x$ are uniformly continuous in the following relative sense. Modulo exponential maps ${\rm exp}_x$, assume we work in a Euclidean space.

\bigskip
{\noindent\bf Claim.} {\it For any $\epsilon>0$, there is $\delta>0$ such that for any $x\in M\setminus Sing(X)$ and any $p,q\in U_x(r_0\|X(x)\|)$, if $d(p,q)<\delta\|X(x)\|$, then $\|D_pF_x-D_qF_x\|<\epsilon$.}

\bigskip
To prove the claim, we estimate $\|D_pF_x-D_qF_x\|$ in two directions, the one dimensional space $<X(x)>$ spanned by $X(x)$ and the normal space $N_x$.

Let $p=v_1+t_2X(x), ~ q=v_2+t_2X(x)\in U_x(r_0\|X(x)\|)$, where $v_1,v_2\in N_x$. If $d(p,q)<\delta \|X(x)\|$, then
$$\|(D_pF_x-D_qF_x)(X(x))\|=\|X(F_x(p))-X(F_x(q))\|$$$$\leq L\|F_x(p)-F_x(q)\|<2Ld(p,q)<2L\delta\|X(x)\|.$$
Hence  $$\|(D_pF_x-D_qF_x)|_{< X(x)>}\|<2L\delta.$$  On the other hand, for any $u\in N_x$,
$$\|(D_pF_x-D_qF_x)(u)\|=\|D_{x+v_1}\varphi_{t_1}(u)-D_{x+v_2}\varphi_{t_2}(u)\|$$
$$\leq\|D_{x+v_1}\varphi_{t_1}(u)-D_{x+v_1}\varphi_{t_2}(u)\|+\|D_{x+v_1}\varphi_{t_2}(u)-D_{x+v_2}\varphi_{t_2}(u)\|$$
$$\leq\|Id-D_{\varphi_{t_1}(x+v_1)}\varphi_{t_2-t_1}\|\cdot\|D_{x+v_1}\varphi_{t_1}(u)\|+\|D_{x+v_1}\varphi_{t_2}(u)-D_{x+v_2}\varphi_{t_2}(u)\|$$
$$\leq|e^{L|t_2-t_1|}-1|\cdot e^{L|t_1|}\cdot\|u\|+\|D_{x+v_1}\varphi_{t_2}(u)-D_{x+v_2}\varphi_{t_2}(u)\|$$
$$\leq e^{1/5}L|t_2-t_1|\cdot\|u\|+\|D_{x+v_1}\varphi_{t_2}(u)-D_{x+v_2}\varphi_{t_2}(u)\|$$
$$<2L\delta\cdot\|u\|+\|D_{x+v_1}\varphi_{t_2}-D_{x+v_2}\varphi_{t_2}\|\cdot\|u\|.$$
Since $M$ is compact, there is $\delta_0>0$ such that for any $y_1,y_2\in M$ with $d(y_1,y_2)<\delta_0$ and any $t\in[-r_0,r_0]$, one has
$$\|D_{y_1}\varphi_{t}-D_{y_2}\varphi_{t}\|<\frac{\epsilon}{4}.$$
Then for any $\epsilon>0$ there is  $\delta>0$ such that if $d(p,q)<\delta\|X(x)\|$ then
$$\|(D_pF_x-D_qF_x)|_{< X(x)>}\|<\frac{\epsilon}{2}, \ \|(D_pF_x-D_qF_x)|_{N_x}\|<\frac{\epsilon}{2}.$$
Hence $$\|(D_pF_x-D_qF_x)\|<\epsilon.$$ This proves the claim.

Now we consider the uniform (in a relative sense) continuity of $DF_x^{-1}$. It is easy to see that
$$\|D_{F_x(p)}F_x^{-1}-D_{F_x(q)}F_x^{-1}\|=\|(D_pF_x)^{-1}-(D_qF_x)^{-1}\|$$$$\leq\|(D_pF_x)^{-1}\|\cdot\|D_pF_x-D_qF_x\|\cdot\|(D_qF_x)^{-1}\|$$
for any $p, ~q\in U_x(r_0\|X(x)\|)$.
From the claim and the fact $\|DF_x^{-1}\|$ is bounded by $2$, it follows that for any $\epsilon>0$, there is $\delta>0$ such that if for any $y_1,y_2\in F_x(U_x(r_0\|X(x)\|))$, if $d(y_1,y_2)<\delta\|X(x)\|$, then $\|D_{y_1}F_x^{-1}-D_{y_2}F_x^{-1}\|<\epsilon$. Thus $$\|D_{y_1}P_x-D_{y_2}P_x\|<\epsilon.$$

Then for any $v,v'\in N_x(r_1\|X(x)\|)$,
$$\|D_vP_{x,T}-D_{v'}P_{x,T}\|=\|D_v(P_{\varphi_T(x)}\circ\varphi_T\circ\exp)-D_{v'}(P_{\varphi_T(x)}\circ\varphi_T\circ\exp)$$
$$\leq\|D_{\varphi_T(\exp_x(v))}P_{\varphi_T(x)}-D_{\varphi_T(\exp_x(v'))}P_{\varphi_T(x)}\|\cdot\|D_{\exp_x(v)}\varphi_T\|\cdot\|D_v\exp_x\|$$
$$+\|D_{\varphi_T(\exp_x(v'))}P_{\varphi_T(x)}\|\cdot\|D_{\exp_x(v)}\varphi_T-D_{\exp_x(v')}\varphi_T\|\cdot\|D_v\exp_x\|$$
$$+\|D_{\varphi_T(\exp_x(v'))}P_{\varphi_T(x)}\|\cdot\|D_{\exp_x(v')}\varphi_T\|\cdot\|D_v\exp_x-D_{v'}\exp_x\|.$$
By the uniform bound of $\|DP_x\|$ and $\|D\varphi_T\|$  and $\|D\exp_x\|$ and the continuity of $D_vP_x$ discussed above, it is straightforward to verify item 2. We omit the details.
\end{proof}

Given the rescaled shadowing condition $d(\varphi_t(x),\varphi_{\theta(t)}(y))\le\delta\|X(\varphi_t(x))\|$, there  is a time sequence $\{\theta(T_k)\}$ for $y$ that corresponds  to the time sequence $\{kT\}$ for $x$  as described in the next proposition.

\begin{Proposition}\label{sequence}
For any $T>r_0$, there is $\delta=\delta(T)>0$ with the following property: for any $x\in M\setminus Sing(X)$ and $y\in M$ and any increasing continuous function $\theta:\mathbb{R}\to \mathbb{R}$ with $d(\varphi_t(x),\varphi_{\theta(t)}(y))\le\delta\|X(\varphi_t(x))\|$ for any $t\in\mathbb{R}$, there is a sequence $\{T_k\}_{k\in\mathbb{Z}}$ such that
\begin{enumerate}
\item $\varphi_{\theta(T_k)}(y)\in\exp(N_{\varphi_{kT}(x)})$;
\item $\|\exp_{\varphi_{kT}(x)}^{-1}(\varphi_{\theta(T_k)}(y))\|\leq 3\delta\|X(\varphi_{kT}(x))\|;$
\item $|\theta(kT)-\theta(T_k)|\le 3\delta$;
\item $P_{\varphi_{kT}(x),T}(\exp_{\varphi_{kT}(x)}^{-1}(\varphi_{\theta(T_k)}(y)))=\exp_{\varphi_{(k+1)T}(x)}^{-1}(\varphi_{\theta(T_{k+1})}(y))$
\end{enumerate}
for any $k\in\mathbb{Z}$.
\end{Proposition}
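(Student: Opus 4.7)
The plan is to construct $\{T_k\}$ directly by projecting $\varphi_{\theta(kT)}(y)$ onto the normal section at $\varphi_{kT}(x)$ using the flowbox map, and then to derive (4) from the embedding property of flowboxes. The main obstacle will be the compatibility (4), which boils down to a uniqueness statement inside a single flowbox.

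First take $\delta\le r_0/9$; then the rescaled shadowing places $\varphi_{\theta(kT)}(y)$ inside the flowbox $F_{\varphi_{kT}(x)}(U_{\varphi_{kT}(x)}(r_0\|X(\varphi_{kT}(x))\|))$ from Proposition~\ref{flowbox}. Using $m(DF_{\varphi_{kT}(x)})\ge 1/3$, decompose uniquely
\begin{equation*}
\varphi_{\theta(kT)}(y)=\varphi_{s_k}\bigl(\exp_{\varphi_{kT}(x)}(v_k)\bigr),\qquad v_k\in N_{\varphi_{kT}(x)},
\end{equation*}
with $\|v_k\|\le 3\delta\|X(\varphi_{kT}(x))\|$ and $|s_k|\le 3\delta$ (the bound $3\delta\|X(\varphi_{kT}(x))\|$ on the preimage follows from the convexity of the Euclidean box and the mean value inequality). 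Shrinking $\delta$ further so that Corollary~\ref{cor} applies, $\theta$ is surjective, so we may pick $T_k$ with $\theta(T_k)=\theta(kT)-s_k$. Then $\varphi_{\theta(T_k)}(y)=\exp_{\varphi_{kT}(x)}(v_k)$, which gives (1), (2), (3) at once.

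For (4), unwinding the definition of the sectional Poincar\'e map yields $P_{\varphi_{kT}(x),T}(v_k)=P_{\varphi_{(k+1)T}(x)}(\varphi_{\theta(T_k)+T}(y))$. I would first verify that $\varphi_{\theta(T_k)+T}(y)$ lies in the domain of $P_{\varphi_{(k+1)T}(x)}$: the triangle inequality against $\varphi_{\theta((k+1)T)}(y)$, Lemma~\ref{lemma23} (bounding $|\theta((k+1)T)-\theta(kT)-T|\le \epsilon T$ for $\delta$ suitably small), together with $|s_k|\le 3\delta$ and a standard Lipschitz ODE estimate, give
\begin{equation*}
d(\varphi_{\theta(T_k)+T}(y),\varphi_{(k+1)T}(x))\le \bigl(C(\epsilon T+3\delta)+\delta\bigr)\|X(\varphi_{(k+1)T}(x))\|
\end{equation*}
for a constant $C=C(L,T)$, which falls below $(r_0/3)\|X(\varphi_{(k+1)T}(x))\|$ after a further shrinking of $\delta$. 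Then write $\varphi_{\theta(T_k)+T}(y)=\varphi_s(\exp_{\varphi_{(k+1)T}(x)}(w))$ in the flowbox at $\varphi_{(k+1)T}(x)$, so that $P_{\varphi_{(k+1)T}(x)}(\varphi_{\theta(T_k)+T}(y))=w$ and $\varphi_{\theta(T_k)+T-s}(y)=\exp_{\varphi_{(k+1)T}(x)}(w)$ sits on the normal slice, as does $\varphi_{\theta(T_{k+1})}(y)=\exp_{\varphi_{(k+1)T}(x)}(v_{k+1})$.

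The main obstacle is to show $w=v_{k+1}$. Since $F_{\varphi_{(k+1)T}(x)}$ is an embedding, any arc of $\varphi_t$ contained in the flowbox meets the normal slice at exactly one point, so it suffices to show that the segment of the $y$-orbit joining $\varphi_{\theta(T_k)+T-s}(y)$ and $\varphi_{\theta(T_{k+1})}(y)$ stays inside the flowbox. Both endpoints are within $O\bigl((\epsilon T+\delta)\|X(\varphi_{(k+1)T}(x))\|\bigr)$ of $\varphi_{(k+1)T}(x)$, and the time separation $|\theta(T_{k+1})-\theta(T_k)-T+s|\le \epsilon T+3\delta+|s_{k+1}|+|s|$ is also $O(\epsilon T+\delta)$; a Gronwall estimate combined with the local bound on $\|X\|/\|X(\varphi_{(k+1)T}(x))\|$ keeps the entire arc inside $B_{(r_0/3)\|X(\varphi_{(k+1)T}(x))\|}(\varphi_{(k+1)T}(x))$, hence inside the flowbox, forcing $w=v_{k+1}$. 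Choosing $\delta=\delta(T)$ small enough to absorb all the constants above completes the proof.
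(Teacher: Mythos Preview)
Your construction of $\{T_k\}$ and the verification of (1)--(3) coincide with the paper's proof: decompose $\varphi_{\theta(kT)}(y)$ in the flowbox at $\varphi_{kT}(x)$ via $F_{\varphi_{kT}(x)}^{-1}$, use the bound $\|DF^{-1}\|\le 3$ to get $\|v_k\|\le 3\delta\|X(\varphi_{kT}(x))\|$ and $|s_k|\le 3\delta$, and invoke surjectivity of $\theta$ to define $T_k$. One omission: for $P_{\varphi_{kT}(x),T}(v_k)$ to be defined you need $3\delta\le r_1(T)=e^{-2LT}r_0/3$, not merely $\delta\le r_0/9$; this is covered by your final ``shrink $\delta$'' clause but is worth stating explicitly.

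For item (4) your argument is correct but takes a detour compared with the paper. The paper simply observes that
\[
\varphi_{\theta(T_k)+T}(y)=\varphi_s\bigl(\exp_{\varphi_{(k+1)T}(x)}(v_{k+1})\bigr)=F_{\varphi_{(k+1)T}(x)}(v_{k+1},s)
\]
for $s=T+\theta(T_k)-\theta(T_{k+1})=T+\theta(kT)-\theta((k+1)T)-s_k+s_{k+1}$, and then checks $|s|\le\epsilon T+6\delta\le r_0$ directly from Lemma~\ref{lemma23}; injectivity of $F$ on the tangent box then gives $P_{\varphi_{(k+1)T}(x)}(\varphi_{\theta(T_k)+T}(y))=v_{k+1}$ in one line. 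You instead compute an a priori unknown $w=P_{\varphi_{(k+1)T}(x)}(\varphi_{\theta(T_k)+T}(y))$, then prove $w=v_{k+1}$ by showing the orbit arc between $\exp(w)$ and $\exp(v_{k+1})$ stays in the flowbox and invoking transversality. That works, but the triangle-inequality estimate on $d(\varphi_{\theta(T_k)+T}(y),\varphi_{(k+1)T}(x))$, the separate bound on $|s|$, the Gronwall control of the arc, and the ``arc meets the slice once'' principle are all unnecessary: the single check $|s|\le r_0$ for the explicit $s$ above already places $(v_{k+1},s)$ in the domain of $F$ and finishes the argument by injectivity. In effect your $s$ and the paper's $s$ are the same number once you unwind the identities, so the two proofs are logically equivalent; the paper just writes down the preimage instead of rediscovering it.
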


Let us briefly explain the statement. If we think of $\exp_x(N_x(r))$, $r$ small, a local cross section at $x$ in $M$, then the flow $\varphi_t$ is transverse to the local sections and induces a holonomy map from $\exp_x(N_x(r))$ to
$\exp_{\varphi_t(x)}(N_{\varphi_t(x)})$. Item (1) just says that $\theta(T_k)$ is the time when the orbit of $y$ cuts the local section $\exp_{\varphi_{kT}(x)}(N_{\varphi_{kT}(x)})$ under the hololomy. Item (2) says the cut point is near the ``origin" $\varphi_{kT}(x)$. Item (4) just says these cuts are in the same orbit (of $y$).

\begin{proof}
Let $T>r_0$ be given. Choose $\epsilon>0$ such that
$$\epsilon T<r_0/2.$$ By Lemma \ref{lemma23}
there is $$\delta<r_0/12$$ such that for any $x\in
M\setminus {\rm Sing}(X)$, any $y\in M$, and any increasing continuous
function $\theta:[0,T]\to\mathbb{R}$, if
$$d(\varphi_t(x),\varphi_{\theta(t)}(y))\le\delta \|X(\varphi_t(x))\|$$ for
all $t\in[0,T]$, then $$|\theta(T)-\theta(0)-T|\le\epsilon T.$$
We also require
$$\delta<r_1/3=r_1(T)/3.$$

Assume we are given $x\in M\setminus {\rm Sing}(X)$ and $y\in M$ with a  increasing
continuous function $\theta:\mathbb{R}\to \mathbb{R}$ such that
$$d(\varphi_t(x),\varphi_{\theta(t)}(y))\le\delta\|X(\varphi_t(x))\|$$ for
all $t\in\mathbb{R}$.  We will look at the sequence of points $\varphi_{kT}(x)$ on ${\rm Orb}(x)$. For each $k\in \mathbb Z$, we will consider a flowbox of relative size $r_0$  around $\varphi_{kT}(x)$. The ``shadowing point" $\varphi_{\theta(kT)}(y)$ is in the flowbox and is near the center $\varphi_{kT}(x)$, but generally not in the normal direction of $X(\varphi_{kT}(x))$. Thus we need to consider a third point, to be denoted $\varphi_{\theta(T_k)}(y)$, the projection of $\varphi_{\theta(kT)}(y)$ along ${\rm Orb}(y)$ to the  normal direction of $X(\varphi_{kT}(x))$.
That is, on ${\rm Orb}(x)$ we will consider the point of time $kT$, while on ${\rm Orb}(y)$ we will consider the two points of time $\theta(kT)$ and $\theta(T_k)$, respectively.

Precisely, let $F_{\varphi_{kT}(x)}$ be the flowbox map of relative size $r_0$ at $\varphi_{kT}(x)$. Write $$F_{\varphi_{kT}(x)}^{-1}(\varphi_{\theta(kT)}(y))=u_k+t_k\cdot X(\varphi_{kT}(x)),$$
 where $u_k\in N_{\varphi_{kT}(x)}$, $t_k\in \mathbb{R}$. Then by the definition of the flowbox map, $$\varphi_{t_k}(\exp_{\varphi_{kT}(x)}(u_k))=\varphi_{\theta(kT)}(y).$$
Since $\|DF_x^{-1}\|\le3$ and $$d(\varphi_{kT}(x), \varphi_{\theta(kT)}(y))\le\delta\|X(\varphi_{kT}(x))\|,$$ we have $\|u_k\|\le3\delta\|X(\varphi_{kT}(x))\|$ and $|t_k|\le3\delta$.

By Corollary \ref{cor}, we may assume that $\theta$ is surjective. Then there is  $T_k$ such that
$$\theta(T_k)=\theta(kT)-t_k.$$
We prove the sequence $\{T_k\}_{k\in\mathbb{Z}}$ satisfies Proposition \ref{sequence}.

Since
$$\varphi_{\theta(T_k)}(y)=\varphi_{\theta(kT)-t_k}(y)=\exp_{\varphi_{kT}(x)}(u_k)\in \exp(N_{\varphi_{kT}(x)}),$$
item (1) holds. Since $$\|\exp_{\varphi_{kT}(x)}^{-1}(\varphi_{\theta(kT)-t_k}(y))=\|u_k\| \le3\delta\|X(\varphi_{kT}(x))\|,$$ item (2) holds. Since $$|\theta(kT)-\theta(T_k)|=|t_k|\le3\delta,$$ item (3) holds.

It remains to prove item (4), which is equivalent to
$$P_{\varphi_{kT}(x),T}(u_k)=u_{k+1}. $$
Note that $\|u_k\|\le3\delta\|X(\varphi_{kT}(x))\|$ and $\delta<r_1/3$, hence the sectional Poincar\'e map is well defined at $u_k$. By the definition of  $P_{\varphi_{kT}(x),T}$, this is the same as
$$P_{\varphi_{(k+1)T}(x)}(\varphi_T(\exp_{\varphi_{kT}(x)}(u_k)))=u_{k+1}.$$
Thus it suffices to find $s\in[-r_0,r_0]$ such that
$$\varphi_T(\exp_{\varphi_{kT}(x)}(u_k))=F_{\varphi_{(k+1)T}(x)}(u_{k+1}+s X(\varphi_{(k+1)T}(x) )),$$
or
$$\varphi_T(\exp_{\varphi_{kT}(x)}(u_k))=\varphi_{s}(\exp_{\varphi_{(k+1)T}(x)}(u_{k+1})).$$
This is the same as
$$\varphi_{T+\theta(kT)-t_k}(y)=\varphi_{s+\theta((k+1)T)-t_{k+1}}(y).$$
Let $$s=T+\theta(kT)-t_k-\theta((k+1)T)+t_{k+1}.$$ Then $$|s|\leq |\theta(kT)+T-\theta((k+1)T)|+|t_k|+|t_{k+1}|$$ $$\le\epsilon T+3\delta+3\delta\le r_0/2+6\delta\le r_0.$$
This proves item (4) and hence Proposition \ref{sequence}.

\end{proof}

\section{Proof of Theorems A}

We will reduce the problem of expansiveness  to the following Proposition \ref{expansive} of the problem of uniqueness of fixed points.

 For any $i\in \mathbb Z$, let $E_i$ be a
$d$-dimensional Euclidean space. Let $Y_0=\Pi_{i=-\infty}^\infty E_i$. For any $v=\{v_i\}\in Y_0$, denote
$\|v\|=\sup\{\|v_i\|\}$. Let
$$Y=\{v\in Y_0:\|v\|<+\infty\}.$$ Then $Y$ is a Banach space with norm $\|\cdot\|$.  For any $i\in \mathbb Z$, let
$$G_i:E_i\to E_{i+1}$$
be a map. These maps define a map  $G :Y\to Y$ by $$(G
v)_{i+1}=G_i(v_i).$$ In other words, $G$ is defined to be ``fiber-preserving" with respect to the shift map $i\to i+1$. Below in Proposition \ref{expansive} and Theorem A the map $G$ will be defined this way.

For any $i\in \mathbb Z$, assume
$E_i$ has a direct sum decomposition
$$E_i=\Delta^s_i\oplus
\Delta^u_i.$$ Define the {\it angle} between $\Delta^s_i$ and $\Delta^u_i$ by
$$
\angle(\Delta^s_i, \Delta^u_i)$$
$$=\inf\{\|u-v\|:(u\in \Delta^s_i, v\in \Delta^u_i, \|u\|=1) {\rm\ or\ }
(u\in \Delta^s_i, v\in \Delta^u_i, \|v\|=1)\}.
$$

\begin{Proposition}\label{expansive}
Let $\eta\in(0,1)$ and $\alpha>0$ be given. There is $\xi>0$
such that if  for every $i\in \mathbb Z$ the splitting $E_i=\Delta^s_i\oplus \Delta^u_i $ has angle
$\angle(\Delta^s_i, \Delta^u_i)>\alpha$, and if
$G:Y\to Y$ has the form $G_i=L_i+\phi_i:E_i\to E_{i+1}$, where $L_i$
is a linear isomorphism of the block form $ L_i=\left(\begin{array}{ll}
          A_i&0\\
          0&D_i
          \end{array}\right)
$ with respect to the splittings of $E_i$ and $E_{i+1}$ such that $\|A_i\|\leq\eta,\|D_i^{-1}\|\leq\eta$, and ${\rm
Lip}\phi_i<\xi$ and $\phi_i(0)=0$, then for any $v\in Y$, $G(v)=v$ implies $v=0$.
\end{Proposition}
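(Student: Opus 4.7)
The plan is to turn the fixed-point problem $G(v)=v$ into a contraction-mapping problem on the Banach space $Y$ whose unique fixed point must be $0$; this is a two-sided version of the standard Hadamard--Perron / hyperbolic shadowing fixed-point reduction, powered here by the hyperbolicity of the $L_i$ together with the Lipschitz smallness of the perturbations $\phi_i$.

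First I would extract from the angle hypothesis a uniform bound on the projections $\pi^s_i,\pi^u_i$ associated with the splitting $E_i=\Delta^s_i\oplus \Delta^u_i$. For any nonzero $u\in \Delta^s_i$ and any $v\in \Delta^u_i$, rescaling to unit length and applying the definition of $\angle(\Delta^s_i,\Delta^u_i)>\alpha$ (with $-v$ in place of $v$) gives $\|u+v\|\ge \alpha\|u\|$, and symmetrically $\|u+v\|\ge \alpha\|v\|$; hence $\|\pi^s_i\|,\|\pi^u_i\|\le 1/\alpha$. Writing $v_i=v_i^s+v_i^u$ and applying $\pi^s_{i+1},\pi^u_{i+1}$ to the equation $v_{i+1}=L_iv_i+\phi_i(v_i)$ produces the decoupled system
\begin{align*}
v_{i+1}^s &= A_i v_i^s + \psi^s_i(v_i), \\
v_{i+1}^u &= D_i v_i^u + \psi^u_i(v_i),
\end{align*}
where $\psi^s_i=\pi^s_{i+1}\circ\phi_i$ and $\psi^u_i=\pi^u_{i+1}\circ\phi_i$ vanish at $0$ and are Lipschitz with constant at most $\xi/\alpha$.

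Next, since $v\in Y$ is bounded and $\|A_i\|\le \eta<1$, iterating the stable equation forward and letting the term $A_{i-1}\cdots A_{i-N}v^s_{i-N}$ decay to zero as $N\to\infty$ yields
\[
v_i^s = \sum_{k=0}^{\infty} A_{i-1}A_{i-2}\cdots A_{i-k}\,\psi^s_{i-k-1}(v_{i-k-1}),
\]
with the empty product at $k=0$ read as the identity. Symmetrically, rewriting the unstable equation as $v_i^u=D_i^{-1}v_{i+1}^u-D_i^{-1}\psi^u_i(v_i)$ and iterating backward, using $\|D_j^{-1}\|\le\eta$, gives
\[
v_i^u = -\sum_{k=0}^{\infty} D_i^{-1}D_{i+1}^{-1}\cdots D_{i+k}^{-1}\,\psi^u_{i+k}(v_{i+k}).
\]

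I would then define $T\colon Y\to Y$ by declaring $(Tw)_i$ to be the sum of the two right-hand sides above with $w$ in place of $v$. Since each $\psi^{s/u}_j$ kills $0$, the series converge uniformly in $i$, so $T$ does land in $Y$ and $T(0)=0$; moreover any fixed point of $G$ in $Y$ is automatically a fixed point of $T$ by the preceding derivation. A plain geometric-series estimate yields $\operatorname{Lip}(T)\le (1+\eta)\xi/(\alpha(1-\eta))$, so choosing $\xi<\alpha(1-\eta)/(1+\eta)$ makes $T$ a strict contraction on $Y$, whose only fixed point is $0$; consequently $v=0$. There is no serious obstacle in this argument --- the only step requiring a touch of care is converting the angle definition into the projection bound $1/\alpha$, which is the short calculation carried out in the second paragraph.
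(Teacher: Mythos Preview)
Your argument is correct and is essentially the paper's proof written out in coordinates: the paper recasts $G(v)=v$ as $v=(I-L)^{-1}\phi(v)$ and shows directly that $\|(I-L)^{-1}\|\le (1+\eta)/(\alpha(1-\eta))$, whereas you expand $(I-L)^{-1}\phi$ as explicit geometric series along the stable and unstable directions, arriving at the same contraction constant $(1+\eta)\xi/(\alpha(1-\eta))$ and the same threshold for $\xi$. The angle-to-projection bound $\|\pi^{s/u}_i\|\le 1/\alpha$ is exactly the estimate the paper uses to control the stable and unstable components of a unit vector.
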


Here $G$ is a Lipschitz perturbation of a ``hyperbolic" operator $L=\{L_i\}$. Since $\phi_i(0)=0$, we know $v=0$ is a fixed point of $G$ already. Then Proposition \ref{expansive} states that $v=0$ is the only fixed point of $G$. This is a classical result, see for instance Pilyugin [P] and Gan [G]. Since there is some slight difference here, for convenience we give the proof.

  \begin{proof} Let $L:Y\to Y$ and $\phi:Y\to Y$ denotes the maps defined by $(L(v))_{i+1}=L_i(v_i)$ and $(\phi(v))_{i+1}=\phi_i(v_i)$ respectively for any
$v=(v_i)$. Let $I$ be the identity map on $Y$. Then we know that
$I-L$ is invertible and $$(I-L)^{-1}=\left(\begin{array}{ll}
          (I-A)^{-1}&0\\
          0&(I-D)^{-1}
          \end{array}\right)$$
where $A=(A_i)$ and $D=(D_i)$. By the fact that $\|A\|\leq\eta$ and
$\|D^{-1}\|\leq\eta$ we know that
$$\|(I-A)^{-1}\|\leq\frac{1}{1-\eta}, ~~~~ \|(I-D)^{-1}\|\leq\frac{\eta}{1-\eta}.$$ For any $v=v_s+v_u$
with $\|v\|=1$, where $v_s\in \Delta^s$ and $v_u\in\Delta^u$, we have $$\|\frac{v_s}{\|v_s\|}+\frac{v_u}{\|v_s\|}\|\geq \alpha$$ by the
definition of angle. Then  $\|v_s\|\leq 1/\alpha$.
Similarly,  $\|v_u\|\leq 1/\alpha$. Hence
$$\|(I-L)^{-1}(v)\|=\|(I-A)^{-1}v_s+(I-D)^{-1}v_u\|$$ $$ \leq
\frac{1}{1-\eta}\|v_s\|+\frac{\eta}{1-\eta}\|v_u\|\leq\frac{1+\eta}{\alpha(1-\eta)}.$$
Hence we have $$\|(I-L)^{-1}\|\leq \frac{1+\eta}{\alpha(1-\eta)}.$$

It is easy to see that $G(v)=v$ is equivalent to $Lv+\phi(v)=v$ and
equivalent to $v=(I-L)^{-1}\phi (v)$. Now we consider a map
$T:Y\to
Y$ defined by
$$T(v)=(I-L)^{-1}\phi (v).$$ Then $T$ and $G$ have the same set of fixed points. For any $u,u'\in Y$, we have
$$\|T(u)-T(u')\|=\|(I-L)^{-1}(\phi(u)-\phi(u'))\|$$ $$\leq \frac{1+\eta}{\alpha(1-\eta)}\cdot{\rm Lip}\phi\cdot\|u-u'\|.$$
Now we choose $\xi>0$ such that
$$\frac{1+\eta}{\alpha(1-\eta)}\xi<1,$$ then $T$ is a contraction
mapping on $Y$ under the assumption ${\rm Lip}\phi<\xi$. We know that $T$ has a unique fixed point $0\in Y$ and  so does $G$. In other words, if there is $v\in Y$ such that $G(v)=v$, then $v=0$. This ends
the proof of Proposition \ref{expansive}.
\end{proof}

Now we prove theorems A, that is, every  multisingular
hyperbolic set is rescaling expansive. In the proof we will not assume the full strength of multisingular hyperbolicity but only a naive version of it. Anyway let us give it a name and a definition. Let $\Lambda$ be a compact invariant set of $X$. We will call a function
$h: (\Lambda\setminus{\rm Sing}(X))\times\mathbb{R}\to (0,+\infty)$  a  {\it naive cocycle} of $X$ on $\Lambda\setminus {\rm Sing}(X)$ if the following two conditions are satisfied:

(1) for any  $x\in\Lambda\setminus {\rm Sing}(X)$ and any $s, t\in \mathbb R$,
$h(x, s+t)=h(x, s)\cdot h(\varphi_s(x), t),$

(2) for any $x\in\Lambda\setminus {\rm Sing}(X)$, there is $K=K(x)>0$ such that
 $h(x, t)\le K$ for all $t\in \mathbb R$.

Following Bonatti-da Luz \cite{BL1} we will call a compact invariant set  $\Lambda$ of $X$ a
{\it naive multisingular hyperbolic set} of $X$ if, for some $C>1$ and $\lambda>0$, there is a $\psi_t$-invariant splitting $N_{\Lambda\setminus {\rm Sing}(X)}=\Delta^s\oplus \Delta^u$ such that

(1) $\Delta^s\oplus \Delta^u$ is a $(C,\lambda)$-dominated splitting with respect to $\psi_t$;

(2) there is a naive cocycle $h_t^s$ of $X$ such that $\Delta^s$ is $(C,\lambda)$-contracting for $h_t^s\cdot \psi_t$;

(3) there is a naive cocycle $h_t^u$ of $X$ such that $\Delta^u$ is $(C,\lambda)$-expanding for $h_t^u\cdot \psi_t$.

Note that this definition does not care about singularities and uses the usual linear Poincar\'e flow defined on $\Lambda\setminus {\rm Sing}(X)$. Let $h(e, t)$ be a pragmatical cocycle on $\tilde\Lambda$ with respect to a singularity $\sigma$ with isolating neighborhood $U$. It gives a cocycle $h(x, t):\Lambda\setminus {\rm Sing}(X)\times \mathbb R\to \mathbb R$ by $h(j(e), t)=h(e, t)$ for $e\in j^{-1}(\Lambda\setminus {\rm Sing}(X)).$ It is not hard to see that
$$h(x, t)\le K(x):=\max\{\frac{\sup_{x\in M}\|X(x)\|}{\|X(x)\|}, ~ \frac{\sup_{x\in M}\|X(x)\|}{\inf_{x\in \partial U}\|X(x)\|}\}$$
for any $x\in \Lambda\setminus {\rm Sing}(X)$ and $t\in \mathbb R$. Thus $h(x, t)$ is a naive cocycle. Hence a reparametrizing cocyle gives automatically a naive cocycle. Then one can easily check that every multisingular hyperbolic set is naive multisingular hyperbolic.

\bigskip

{\noindent\bf Proof of Theorem A}. ~
In fact we prove every naive multisingular
hyperbolic set is rescaling expansive. Let $\Lambda$ be a naive multisingular hyperbolic set of $X$ with a $(C, \lambda)$-dominated splitting
$$N_{\Lambda\setminus Sing(X)}=\Delta^s\oplus \Delta^u.$$ Let $L>0$ be a local Lipschitz constant of $X$.  Choose $T>0$ big enough such that $$\eta^2=Ce^{-\lambda T}<1.$$
Since $N_{\Lambda\setminus {\rm Sing}(X)}\Delta^s\oplus
\Delta^u$ is a dominated splitting, there is $\alpha>0$ such
that
$$\angle(\Delta^s(x),\Delta^u(x))>\alpha$$ for every $x\in\Lambda\setminus {\rm Sing}(X)$. Note that
this is guaranteed by the (uniform) dominance of the splitting on $\Lambda\setminus {\rm Sing}(X)$, even though $\Lambda\setminus {\rm Sing}(X)$ is non-compact. See \cite{HW} for
a proof.

Now we determine the number $\epsilon_0>0$ for Theorem A, which is supposed to depend only on the vector field $X$ and the set $\Lambda$ (and hence on $L$, $T$, $\eta$, and $\alpha$) but not on $x$, $y$  and others.

Let $\xi>0$ be the constant given in Proposition
\ref{expansive} associated to $\eta$ and $\alpha$. Take $\epsilon_0>0$ so that
$$\epsilon_0\le \min\{r_1/3, ~ 3\delta(T)\},$$
where $r_1=r_1(T)$ is the number in the definition of the sectional Poincar\'e map, and $\delta(T)$ is the number in the statement of Proposition 3.2. Also, by item 2 of Proposition \ref{prop3.1}, we can  take $\epsilon_0$ so that for any regular point $z$ of $X$, if
$z'\in\exp_z(N_z)$ and $d(z',z)<3\epsilon_0\|X(z)\|$ then
$$\|D_{\exp^{-1}_z(z')}P_{z,T}-D_0P_{z,T}\|<\frac{\xi}{5\eta^{-1}e^{LT}}.$$
Here $\xi/(5\eta^{-1}e^{LT})$ and $3\epsilon_0$ play the role of ``$\epsilon$" and ``$\delta$"
in the statement of Proposition \ref{prop3.1}. Since $D_0P_{z,T}=\psi_T|_{N_z}$, this is the same as $$\|D_{\exp^{-1}_z(z')}P_{z,T}-\psi_T|_{N_z}\|<\frac{\xi}{5\eta^{-1}e^{LT}}.$$
This settles the choice of $\epsilon_0>0$.

Let $h_t^s$ and $h_t^u$ be two naive cocycles such that $h_t^s\cdot\psi_t|_{\Delta^s}$ is $(C,\lambda)$-contracting and $h_t^u\cdot\psi_t|_{\Delta^u}$ is $(C,\lambda)$-expanding. Then for any
$x\in\Lambda\setminus {\rm Sing}(X)$,

(a) ~ $\|\psi_T|_{\Delta^s_x}\|\cdot\|\psi_{-T}|_{\Delta^u(\varphi_T(x))}\|\le\eta^2$;

(b) ~ $h_T^s(x)\cdot\|\psi_T|_{\Delta^s(x)}\|\le\eta$;

(c) ~ $h_T^u(x)\cdot m(\psi_T|_{\Delta^u(x)})\ge\eta^{-1}$.

The key to the proof of Theorem A is the following
\vskip 0.2cm
{\noindent\bf Claim.}  {\it For every $x\in \Lambda\setminus {\rm Sing}(X)$, there is a sequence $\{c_i=c_i(x)>0: i\in \mathbb{Z}\}$ such that the following three conditions hold:

$(A1)$ The set $\{c_i(x): x\in \Lambda\setminus {\rm Sing}(X),~i\in \mathbb Z \}$ of numbers is bounded.

$(A2)$ For every $x\in \Lambda\setminus {\rm Sing}(X)$, $c_i\cdot\|\psi_T|_{\Delta^s(\varphi_{iT}(x))}\|\le\eta$, and $
c_i\cdot m(\psi_T|_{\Delta^u(\varphi_{iT}(x)})\ge\eta^{-1}.$

$(A3)$ Denote $b_i=b_i(x)=c_0\cdot c_1\cdots c_{i-1}$ for $i>0$ and  $b_i=b_i(x)=c_{i}^{-1}\cdot c_{i+1}^{-1}\cdots c_{-1}^{-1}$ for $i<0$. Then for every $x\in \Lambda\setminus {\rm Sing}(X)$, the sequence $\{b_i(x)\}_{i\in Z}$  is bounded.}

\vskip 0.2cm Briefly, condition (A2) says that, replacing $h_T^s(\varphi_{iT}(x))$ and $h_T^u(\varphi_{iT}(x))$ both by $c_i$,  items  (b) and (c) hold simultaneously. Condition (A3) says a ``bounded product" property.

\vskip 0.2cm

\nt {\it Proof of the Claim}. Let $x\in\Lambda\setminus {\rm Sing}(X)$. We define $c_i$ by two different formulas depending on $i\ge 0$ or $i<0$. If $i\ge 0$, let $$c_i=c_i(x)=\frac{\eta^{-1}}{m(\psi_T|_{\Delta^u(\varphi_{iT}(x))})}.$$ Then

(1) ~ $\eta^{-1}e^{-LT}\leq c_i\leq \eta^{-1} e^{LT}$;

(2) ~ $c_i\cdot m(\psi_T|_{\Delta^u(\varphi_{iT}(x))})=\eta^{-1}$;

(3) ~ $c_i\cdot \|\psi_T|_{\Delta^s(\varphi_{iT}(x))}\|\le\eta$;

(4) ~ $c_i\le h_T^u(\varphi_{iT}(x))$.

Thus (1) verifies condition (A1) for $i\ge 0$, and (2) and (3) verify  condition (A2) for $i\ge 0$.

If $i<0$,  let $$c_i=c_i(x)=\frac{\eta}{\|\psi_T|_{\Delta^s(\varphi_{iT}(x))}\|}.$$ Then

(1*) ~ $\eta e^{-LT}\leq c_i\leq \eta e^{LT}$;

(2*) ~ $c_i\cdot \|\psi_T|_{\Delta^s(\varphi_{iT}(x))}\|=\eta$;

(3*) ~ $c_i\cdot m(\psi_T|_{\Delta^u(\varphi_{iT}(x))})\ge\eta^{-1}$;

(4*) ~ $c_i\ge h_T^s(\varphi_{iT}(x))$.

Thus (1*) verifies condition (A1) for $i< 0$, and (2*) and (3*) verify condition (A2) for $i< 0$.

Let $b_0=1$. For every $i>0$,  by (4),
$$b_i=b_i(x)=c_0\cdot c_1\cdots c_{i-1}$$
$$\le h_T^u(x)\cdot h_T^u(\varphi_T(x))\cdots\ h_T^u(\varphi_{(i-1)T}(x))=h_{iT}^u(x). $$
For every $i<0$, by (4*),
 $$b_i=b_i(x)=c_{i}^{-1}\cdot c_{i+1}^{-1}\cdots c_{-1}^{-1}$$
 $$\le[h_T^s(\varphi_{iT}(x))\cdot h_T^s(\varphi_{(i+1)T}(x))\cdots\ h_T^s(\varphi_{-T}(x))]^{-1}$$ $$=[h^s_{-iT}(\varphi_{iT}(x))]^{-1}=h^s_{iT}(x).$$
 By the definition of naive cocycle,  for fixed $x$, the two sequences $\{h_{iT}^u(x)\}_{i\in \mathbb Z}$ and $\{h^s_{iT}(x)\}_{i\in \mathbb Z}$ are bounded. Thus the sequence $\{b_i(x)\}_{i\in \mathbb Z}$ is bounded. This verifies condition (A3), proving the Claim.

Now let $0<\epsilon\leq\epsilon_0$. Let $x\in \Lambda$ and $y\in M$ and an
 increasing continuous function $\theta:\mathbb{R}\to\mathbb{R}$ be given such that
$$d(\varphi_t(x),\varphi_{\theta(t)}(y))\le(\epsilon/3)\|X(\varphi_t(x))\|$$ for all $t\in\mathbb{R}$.
From now on  $x$ and $y$ will be fixed till the end of the proof of Theorem A. We prove $$\varphi_{\theta(t)}(y)\in \varphi_{[-\epsilon,~ \epsilon]}(\varphi_t(x))$$ for all $t\in \mathbb R$.
We assume $x\notin {\rm Sing}(X)$ because otherwise the situation would be trivial.

Denote
$$E_i=N_{\varphi_{iT}(x)}.$$

Let $\beta:[0,+\infty)\to[0,1]$ be a bump function such that

(a) ~ $\beta(t)=1$ for $t\in[0,1/3]$;

(b) ~ $\beta(t)=0$ for $t\in[2/3,+\infty)$;

(c) ~ $\beta'(t)\in[-4,0]$ for any $t\in [0,+\infty)$.

Define
$$P_i:E_i\to
E_{i+1}$$ to be
$$P_i(v)=\beta(\frac{\|v\|}{3\epsilon\|X(\varphi_{iT}(x))\|})\cdot P_{\varphi_{iT}(x),T}(v)+(1-\beta(\frac{\|v\|}{3\epsilon\|X(\varphi_{iT}(x))\|}))\cdot \psi_T(v).$$
 Roughly, we use the bump function $\beta$ to extend the local map $P_{\varphi_{iT}(x),T}$ defined near the origin of $E_i$  to the whole $E_i$ so that it agrees with $\psi_T$ away from the origin. Precisely,
$P_i=P_{\varphi_{iT}(x), T}$
 inside the ball $N_{\varphi_{iT}(x)}(\epsilon\|X(\varphi_{iT}(x))\|)$, and $P_i=\psi_T$ outside the ball  $N_{\varphi_{iT}(x)}(3\epsilon\|X(\varphi_{iT}(x))\|)$. Note that $3\epsilon\le r_1$, hence $P_{\varphi_{iT}(x), T}$ is well defined in the ball $N_{\varphi_{iT}(x)}(3\epsilon\|X(\varphi_{iT}(x))\|)$, and hence $P_i$ is well defined on the whole $E_i$.  A direct computation gives
$$\|D_vP_i-\psi_T|_{E_i}\|<\frac{\xi}{\eta^{-1}e^{LT}}, ~~ \forall ~ v\in E_i.$$

\bigskip
\nt {\bf Remark.} For convenience we sketch the computation.
Abbreviate $r=3\epsilon\|X(\varphi_{iT}(x))\|$. Then
$$P_i(v)-\psi_T(v)=\beta(\frac{\|v\|}{r})\cdot (P_{\varphi_{iT}(x),T}(v)-\psi_T(v)).$$
We may assume $\|v\|\le r$ because otherwise the value is 0. Hence
$$D_vP_i-\psi_T$$ $$=\beta'(\frac{\|v\|}{r})\cdot \frac{1}{r}\cdot\frac{v}{\|v\|}\cdot (P_{\varphi_{iT}(x),T}(v)-\psi_T(v))+\beta(\frac{\|v\|}{r})\cdot(D_vP_{\varphi_{iT}(x),T}-\psi_T).$$
By the generalized mean value theorem,
$$\|P_{\varphi_{iT}(x),T}(v)-\psi_T(v)\|\leq \frac{\xi}{5\eta^{-1}e^{LT}}\|v\|.$$
Thus
$$\|D_vP_i-\psi_T\|\leq |\beta'(\frac{\|v\|}{r})|\cdot \frac{\xi}{5\eta^{-1}e^{LT}}\cdot\frac{\|v\|}{r}+|\beta(\frac{\|v\|}{r})|\cdot\frac{\xi}{5\eta^{-1}e^{LT}}\leq \frac{\xi}{\eta^{-1}e^{LT}}.$$
The last step uses the facts that $|\beta'|\le 4$, $\|v\|\le r$, and $|\beta|\le 1$. This ends the remark.

\bigskip
Since $\epsilon/3\le \epsilon_0/3\le \delta(T)$, by Proposition \ref{sequence}, there is a sequence $\{T_i:i\in\mathbb{Z}\}$ such that $\varphi_{\theta(T_i)}(y)\in\exp(N_{\varphi_{iT}(x)}).$
Let $$u_i=\exp_{\varphi_{iT}(x)}^{-1}(\varphi_{\theta(T_i)}(y))\in E_i.$$ By items 2 and 4 of Proposition \ref{sequence}, we have $$\|u_i\|\le \epsilon\|X(\varphi_{iT}(x))\|, ~~ P_{\varphi_{iT}(x),T}(u_i)=u_{i+1}.$$ That is,
$$P_i(u_i)=u_{i+1}.$$
Let
$u=(u_i)_{i\in \mathbb{Z}}.$ Since $M$ is compact, $\{\|X(z)\|\}_{z\in M}$ is bounded. Hence
$$\|u\|=\sup\{\|u_i\|:i\in\mathbb{Z}\}<+\infty,$$ i.e., $u\in Y.$ Here $Y$ consists of all bounded elements of $Y_0$, where $Y_0=\Pi_{i=-\infty}^\infty E_i$ (see the beginning of section 4 for notations).

Now define
$$G_i: E_i\to E_{i+1}$$
to be
$$G_i(v)= b_{i+1} P_i (b_i^{-1} v).$$ Here $b_i=b_i(x)$ is given by the Claim, where $x$ is the point that has been fixed such that $0_x$ is the origin of $E_0$. Since $$b_{i+1}\cdot b_i^{-1}=c_i\in[\eta e^{-LT}, \eta^{-1}e^{LT}]$$ (condition (A1)), and $$\|D_vP_i-\psi_T|_{E_i}\|<\frac{\xi}{\eta^{-1}e^{LT}}, ~~ \forall ~ v\in E_i,$$   we have $$\|D_vG_i-c_i\psi_T|_{E_i}\|<\xi, ~~ \forall ~ v\in E_i.$$ Define $G:Y\to Y_0$ to be
$$G|_{E_i}=G_i.$$  Since $G(0)=0,$
the derivative condition $\|D_vG_i-c_i\psi_T|_{E_i}\|<\xi$ guarantees that $G$ maps a bounded element of $Y_0$ to a bounded element of $Y_0$. That is, $G$ maps $Y$ into $Y$ and hence
$$G:Y\to Y$$ is well defined. Write
$$G_i=c_i\psi_T|_{E_i}+\phi_i.$$ Then
$${\rm Lip}(\phi_i)<\xi.$$
By condition (A2), $c_i\psi_T|_{E_i}$ can serve as the operator $L_i$ of Proposition \ref{expansive}.

Let $w_i=b_i u_i$. Then  $$G_i(w_i)=b_{i+1}P_i(b_i^{-1}b_i u_i)=b_{i+1}u_{i+1}=w_{i+1}.$$ That is,
$$G(w)=w,$$ where $w=(w_i)_{i\in \mathbb{Z}}.$ By condition (A3), the sequence $\{b_i\}_{i\in \mathbb Z}$ is bounded.
Then $$\|w\|=\sup\{\|w_i\|:i\in\mathbb{Z}\}<+\infty,$$ i.e.,  $w\in Y.$ Therefore, by Proposition \ref{expansive}, $w=0$. From $w_i=0$ we get $v_i=0.$ That is, $$\varphi_{\theta(T_i)}(y)=\varphi_{iT}(x).$$ By Proposition \ref{sequence}, $$|\theta(T_i)-\theta(iT)|\le3\cdot (\epsilon/3)=\epsilon.$$ Then  $$\varphi_{\theta(iT)}(y)=\varphi_{\theta(iT)-\theta(T_i)}(\varphi_{\theta(T_i)}(y))$$ $$=\varphi_{\theta(iT)-\theta(T_i)}(\varphi_{iT}(x))\in\varphi_{[-\epsilon, ~\epsilon]}(\varphi_{iT}(x)).$$

Now for any $\tau\in\mathbb{R}$,  set
$$z=\varphi_\tau(x), ~ y_1=\varphi_{\theta(\tau)}(y), ~ \theta_1(t)=\theta(t+\tau)-\theta(\tau).$$ Then
$$d(\varphi_{\theta_1(t)}(y_1),\varphi_t(z))=d(\varphi_{\theta(t+\tau)}(y),\varphi_{t+\tau}(x))$$ $$\le(\epsilon/3)\|X(\varphi_{t+\tau}(x))\|=(\epsilon/3)\|X(\varphi_t(z))\|.$$ Hence $y_1=\varphi_{\theta_1(0)}(y_1)\in\varphi_{[-\epsilon, ~\epsilon]}(z)$. Thus $$\varphi_{\theta(\tau)}(y)\in\varphi_{[-\epsilon, ~\epsilon]}(\varphi_{\tau}(x)).$$
This ends the proof of Theorem A.

\begin{Corollary}\label{Corollary}
Let $\Lambda$ be a singular
hyperbolic set of a $C^1$ vector field $X$ on $M$. Then $\Lambda$
is rescaling expansive. In fact, there is $\epsilon_0>0$ such that for any $0<\epsilon\leq\epsilon_0$, any $x\in\Lambda$ and $y\in M$, and any increasing continuous functions $\theta: \mathbb R\to \mathbb R$, if
$d(\varphi_{\theta(t)}(y),\varphi_t(x))\le(\epsilon/3)\|X(\varphi_t(x))\|$ for all
$t\in\mathbb{R}$, then  $\varphi_{\theta(t)}(y)\in \varphi_{[-\epsilon, \epsilon]}(\varphi_t(x))$ for all $t\in \mathbb R$.

\end{Corollary}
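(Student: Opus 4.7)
The plan is to derive Corollary \ref{Corollary} as an immediate application of Theorem A. Theorem A already establishes the exact conclusion—verbatim, with the same constant $\delta = \epsilon/3$ and the same $\epsilon_0$—for every multisingular hyperbolic set. Consequently, the only thing left to check is that singular hyperbolicity implies multisingular hyperbolicity. This implication was announced in the introduction as Proposition \ref{lemma32}, so the formal proof of the corollary is a single line: invoke Proposition \ref{lemma32} to certify that $\Lambda$ is multisingular hyperbolic, then apply Theorem A.

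For completeness I would indicate why the implication holds. Starting from the singular hyperbolic splitting $T_\Lambda M = E \oplus F$, one first observes that at every regular point $x \in \Lambda$ the flow direction lies in $F_x$, because vectors in $E$ are exponentially contracted by $\Phi_t$ whereas $\|\Phi_t X(x)\| = \|X(\varphi_t(x))\|$ is bounded on the compact manifold $M$. Hence the normal projections of $E$ and $F$ are transverse and give a continuous, $\psi_t$-invariant dominated splitting on $N$ over $\Lambda \setminus \mathrm{Sing}(X)$; this extends continuously to a $\tilde\psi_t$-invariant splitting $\Delta^s \oplus \Delta^u$ of $N_{\tilde\Lambda}$. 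The stable part inherits uniform contraction directly from $\Phi_t|_E$, so one may take $h^s_t \equiv 1$ as a (trivially) reparametrizing cocycle. For the unstable part, the area-expansion of $F$ combined with the fact that the flow direction inside $F$ evolves exactly by the factor $\|X(\varphi_t(x))\|/\|X(x)\|$ turns into genuine expansion of $\Delta^u$ once one rescales by a pragmatical cocycle that equals $\|\Phi_t X\|$ on an isolating neighborhood of each singularity of $\Lambda$ and equals $1$ outside.

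I expect the only real obstacle—and the reason Proposition \ref{lemma32} deserves its own statement rather than being absorbed into this corollary—is the continuous extension of the normal splitting across the lifts of singularities in $\tilde\Lambda$. At a singularity $\sigma$, the set $\tilde\Lambda$ contains a whole family of unit vectors, and one must verify that the assignment $e \mapsto \Delta^s(e) \oplus \Delta^u(e)$ is continuous in $e$ and remains dominated there, using the hyperbolicity of $\sigma$. Granting Proposition \ref{lemma32}, which packages exactly this continuity, the corollary is immediate from Theorem A with no additional estimate required.
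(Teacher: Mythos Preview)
Your proposal is correct and matches the paper's approach exactly: the corollary is obtained as an immediate consequence of Theorem A together with Proposition \ref{lemma32}, which asserts that every singular hyperbolic set is multisingular hyperbolic. Your sketch of Proposition \ref{lemma32} also tracks the paper's argument closely, the only cosmetic difference being that the paper takes the global cocycle $h^u_t(e)=\|\Phi_t(e)\|$ on all of $\tilde\Lambda$ rather than a pragmatical cocycle supported near the singularities.
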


The next proposition explains why this is a corollary of Theorem A.

\begin{Proposition}\label{lemma32}
Every  singular hyperbolic set  is multisingular hyperbolic.

\end{Proposition}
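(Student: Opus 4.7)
The plan is to build from the singular hyperbolic splitting $T_\Lambda M=E\oplus F$ a $\tilde\psi_t$-invariant splitting $N_{\tilde\Lambda}=\Delta^s\oplus\Delta^u$ together with reparametrizing cocycles that witness items (1)--(3) of multisingular hyperbolicity. The starting point is the standard fact (a direct consequence of domination combined with $\Phi_t(X(x))=X(\varphi_t(x))$ and compactness of $\Lambda$) that $X(x)\in F_x$ at every regular $x\in\Lambda$. Consequently every $e\in\tilde\Lambda$, including those in $j^{-1}({\rm Sing}(X)\cap\Lambda)$ obtained as limits of $X(x_n)/\|X(x_n)\|$, lies in $F_{j(e)}$.

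First I would define, for every $e\in\tilde\Lambda$,
\[
\Delta^s(e)=\pi_{N_e}(E_{j(e)}),\qquad \Delta^u(e)=F_{j(e)}\cap N_e,
\]
with $\pi_{N_e}$ the orthogonal projection. Since $E\cap F=\{0\}$ and $e\in F_{j(e)}$, these subspaces are transverse of complementary dimension in $N_e$, and they vary continuously on $\tilde\Lambda$ because $E,F$ vary continuously on $\Lambda$. Invariance of $\Delta^s\oplus\Delta^u$ under $\tilde\psi_t$ at regular points is immediate from invariance of $E,F$ under $\Phi_t$ and the identity $\Phi_t(X(x))=X(\varphi_t(x))$; it extends to singularities by continuity. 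Domination under $\tilde\psi_t$ follows from domination of $E\oplus F$ under $\Phi_t$ together with a uniform lower bound on $\angle(E,X)$ (itself a corollary of domination plus $X\in F$), which ensures that $\pi_N|_E$ has uniformly bounded inverse.

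For the cocycles, fix pairwise disjoint isolating neighborhoods $U_\sigma$ of the singularities $\sigma\in\Lambda\cap{\rm Sing}(X)$, and for each $\sigma$ let $h^\sigma_t(e)$ be the pragmatical cocycle that equals $\|\Phi_t(e)\|$ inside $j^{-1}U_\sigma$ and $1$ outside, setting $h^s_t=h^u_t=\prod_\sigma h^\sigma_t$. Under the bounded-inverse identification of $\Delta^s(e)$ with $E_{j(e)}$, multiplication by $\|\Phi_t(e)\|=\|X(\varphi_t(x))\|/\|X(x)\|$ converts $\tilde\psi_t|_{\Delta^s}$ back into $\Phi_t|_E$ near singularities, so item (2) reduces to the $(C,\lambda)$-contraction of $\Phi_t$ on $E$. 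For item (3), the area expansion $|\det(\Phi_{-t}|_{{\rm span}(X,u)})|<Ce^{-\lambda t}$ combined with $\|X\wedge u\|=\|X\|\cdot\|\pi_N u\|$ yields $\|\psi_{-t}(u)\|<Ce^{-\lambda t}\,\|X(y)\|/\|X(\varphi_{-t}(y))\|\cdot\|u\|$ for $u\in\Delta^u$, and $h^u_{-t}$ is engineered to cancel precisely that flow-speed ratio inside singularity neighborhoods while being trivial outside, where $\|X\|$ is bounded below.

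The main technical obstacle will be handling orbits that repeatedly enter and leave the neighborhoods $U_\sigma$: across each crossing the pragmatical cocycle toggles between $\|\Phi_t(e)\|$ and $1$, and the telescoped product must still deliver uniform $(C,\lambda)$-estimates over arbitrarily long orbit arcs. Compactness of $\partial U_\sigma$ together with hyperbolicity of $\sigma$ pins $\|X\|$ between positive constants on $\partial U_\sigma$, making every in-and-out excursion uniformly comparable to the corresponding $\|\Phi\|$-ratio, so after a harmless enlargement of $C$ and shrinkage of $\lambda$ the singular-hyperbolic estimates transfer to the multisingular form required by items (1)--(3).
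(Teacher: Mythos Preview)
Your construction of the splitting $\Delta^s\oplus\Delta^u$, its invariance, and the domination argument are essentially the paper's. The treatment of $\Delta^u$ via the area-expansion identity $\|\Phi_{-t}(e)\|\cdot\|\tilde\psi_{-t}(v)\|=|\det(\Phi_{-t}|_L)|$ for $L=\mathrm{span}(e,v)$ is also the paper's computation.

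The genuine gap is your choice of $h^s_t=\prod_\sigma h^\sigma_t$, and the sentence justifying it is false. Under the identification $\pi_{N_e}|_{E}:E_{j(e)}\to\Delta^s(e)$ (which has uniformly bounded inverse by the angle bound you already invoked) one has
$\tilde\psi_t|_{\Delta^s(e)}=\pi_{N_{\Phi^\#_t(e)}}\circ\Phi_t|_{E}\circ(\pi_{N_e}|_E)^{-1}$,
so $\tilde\psi_t|_{\Delta^s}$ is \emph{already} comparable to $\Phi_t|_E$ up to a fixed multiplicative constant; no factor $\|\Phi_t(e)\|$ appears. Multiplying by $\|\Phi_t(e)\|$ does not ``convert it back'' to anything---it inserts an unrelated expansion rate coming from the $F$-direction $e$, and this can destroy the contraction. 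Concretely, take $\sigma\in\Lambda$ with eigenvalues $\lambda_{ss}<\lambda_{cs}<0<\lambda_u$ and splitting $E_\sigma=E^{ss}$, $F_\sigma=E^{cs}\oplus E^u$, and let $e\in\tilde\Lambda_\sigma$ be the unit eigenvector in $E^u_\sigma$ (present whenever $W^u(\sigma)\setminus\{\sigma\}\subset\Lambda$, as in any Lorenz-like set). Then $j(\Phi^\#_t(e))=\sigma$ for all $t$, so your pragmatical cocycle gives $h^s_t(e)=\|\Phi_t(e)\|=e^{\lambda_u t}$, while $\|\tilde\psi_t|_{\Delta^s(e)}\|=e^{\lambda_{ss}t}$. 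Singular hyperbolicity forces $\lambda_{ss}<0$, $\lambda_{cs}+\lambda_u>0$, and $\lambda_{ss}<\lambda_{cs}$, but places no sign constraint on $\lambda_{ss}+\lambda_u$; for $\lambda_{ss}=-1$, $\lambda_{cs}=-\tfrac12$, $\lambda_u=2$ your product $h^s_t(e)\cdot\|\tilde\psi_t|_{\Delta^s(e)}\|=e^{t}$ blows up and item~(2) fails.

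The paper simply takes $h^s_t\equiv 1$: since $\tilde\psi_t|_{\Delta^s}$ is uniformly comparable to $\Phi_t|_E$, the $(C,\lambda)$-contraction of $E$ gives item~(2) directly with the trivial cocycle. For item~(3) the paper takes the global cocycle $h^u_t(e)=\|\Phi_t(e)\|$ on all of $\tilde\Lambda$ rather than a localized pragmatical product, which makes the area-expansion estimate exact at every point and eliminates the boundary-crossing bookkeeping you flagged as the main obstacle.
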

\begin{proof}

Let $\Lambda$ be a $(C, \lambda)$-singular hyperbolic set of $X$ with
dominated splitting $E\oplus F$ of $\Phi_t$. Without loss of generality we assume $\Lambda$ is positive singular hyperbolic for $X$.  Thus $E$ is $(C, \lambda)$-contracting and $F$ is $(C, \lambda)$-area-expanding with respect to $\Phi_t$. First we work on  the usual linear Poincar\'e flow on $\Lambda\setminus
{\rm Sing}(X)$. For $x\in \Lambda\setminus
{\rm Sing}(X)$, let $h^s_t(x)\equiv 1$ and $h^u_t(x)=\|\Phi_t|_{\langle X(x)\rangle}\|$.
Note that since
$\|\Phi_t|_{\langle X(x)\rangle}\|={\|X(\varphi_t(x))\|}/{\|X(x)\|},$ $h_t^u$ satisfies the  cocycle condition. We prove  the following three items:
\begin{enumerate}
\item there is a dominated splitting $N_{\Lambda\setminus {\rm Sing}(X)}=\Delta^s\oplus \Delta^u$ with respect to $\psi_t$;
\item $\Delta^s$ is contracting for $\psi_t$;
\item $\Delta^u$ is expanding for $\|\Phi_t|_{<X(x)>}\|\cdot\psi_t$.
\end{enumerate}

The proof is straightforward. First note that,  for any $x\in \Lambda\setminus {\rm Sing}(X)$, the flow direction
$X(x)$ is contained in the linear subspace $F_x$ of $T_x M$. In fact, if $X(x)\notin F_x$ then, by dominance, $\Phi_{-t}(X(x))$ would accumulate on $E$ when $t\to +\infty$, hence
$$\|X(\varphi_{-t}(x))\|=\|\Phi_{-t}(X(x))\|$$  would grow exponentially, contradicting
that $\|X(x)\|$ is bounded above on $\Lambda$. Thus $X(x)\in F_x$ for any $x\in \Lambda\setminus
{\rm Sing}(X)$.

Now we proceed to find $\Delta^s$ and $\Delta^u$ in
$N_{\Lambda\setminus {\rm Sing}(X)}$. For any $x\in\Lambda\setminus {\rm Sing}(X)$, let $\pi_x: T_x M\to N_x$ be the
orthogonal projection. Put $$\Delta^s(x)=\pi_x(E_x).$$ Since the angle between $E$ and $F$ is positive and since $\langle X \rangle\subset F$, the angles between $E(x)$ and
$X(x)$ for all $x\in\Lambda\setminus {\rm Sing}(X)$ have a positive lower bound. In particular, $\pi_x|_{E_x}$ is a linear isomorphism and ${\rm dim}\Delta^s(x)={\rm dim}E_x$. Also, there is $K>1$ such that
 $$m(\pi_x|_{E_x})>K^{-1}$$ for all $x\in\Lambda\setminus {\rm Sing}(X)$. By the
invariance of subbundle $E$ and the definition of $\psi_t$ we can
easily check that $$\psi_t(\Delta^s(x))=\Delta^s(\varphi_t(x)).$$
Then for any unit vector $u\in \Delta^s_x$,
$$\|\psi_t(u)\|=\|\pi_{\varphi_t(x)}\circ\Phi_t\circ(\pi_x|_{E_x})^{-1}(u)\|<KCe^{-\lambda
t},$$ proving (2).

Then we put $$\Delta^u(x)=\pi_x(F_x)=N_x\cap F_x.$$ For any unit
vector $v\in \Delta^u(x)$, let  $L$ be the plane spanned by
$v$ and $X(x)$. Then
$$\|\Phi_{-t}|_{<
X(x)>}\|\cdot \|\psi_{-t}(v)\|=|{\rm
det}(\Phi_{-t}|_{L})|<Ce^{-\lambda t}$$ for any $t>0$, proving (3).

 We verify that $\Delta^s\oplus \Delta^u$ is a dominated splitting with respect to $\psi_t$. There is a tricky point here as we have to look at the negative direction of the flow: For any unit vectors $u\in \Delta^s_x$ and $v\in \Delta^u_x$ and any $t>0$,
$$\frac{\|\psi_{-t}(v)\|}{\|\psi_{-t}(u)\|}\leq \frac{\|\Phi_{-t}(v)\|}{\|\pi_{\varphi_{-t}(x)}\Phi_{-t}(u')\|}\leq \frac{\|\Phi_{-t}(v)\|}{K^{-1}\|\Phi_{-t}(u')\|}$$
$$=\frac{K\|\Phi_{-t}(v)\|}{\|u'\|\|\Phi_{-t}(u'/{\|u'\|})\|}\leq K Ce^{-\lambda t},$$
 where $u'=(\pi_x|_{E_x})^{-1}(u)$. The last inequality uses the fact that $u'\in E_x$ and $v\in F_x$. This proves (1).

 Now we extend everything to $\tilde\Lambda$. Denote
 $$\Lambda^\#=\{X(x)/\|X(x)\|: x\in \Lambda\setminus {\rm Sing}(X)\}. $$
 Then $\overline{\Lambda^\#}=\tilde\Lambda$. The cocycle $$h:(\Lambda\setminus {\rm Sing}(X))\times \mathbb {R} \to (0, \infty)$$
 $$h(x, t)=\|\Phi_t|_{\langle X(x)\rangle}\|=\|\Phi_t(\frac{X(x)}{\|X(x)\|})\|$$
 gives a cycle $$h:\Lambda^\#\times \mathbb {R} \to (0, \infty)$$
 $$h(e, t)=\|\Phi_t(e)\|,$$
 which is uniformly continuous and hence extends to a (reparametrizing) cocycle
 $$\tilde h:\tilde\Lambda\times \mathbb {R} \to (0, \infty)$$
 $$\tilde h(e, t)=\|\Phi_t(e)\|.$$

 As usual, the dominated splitting $\Delta^s\oplus \Delta^u$ of $\psi_t$ extends to a dominated splitting (still denoted) $\Delta^s\oplus \Delta^u$ of $\tilde\psi_t$ such that items (1) through (3) still hold. This proves
 Proposition \ref{lemma32}.
\end{proof}

\section{Proof of Theorem B}

 For preciseness  we use  sometimes the notation $\varphi_t^X$ to denote the flow generated by the vector field $X$.
\begin{Lemma}\label{C1}
Let $X\in \mathcal{X}^1(M)$ and let $Q$ be a non-hyperbolic periodic orbit of $X$. For any neighborhood $\mathcal{U}$ of $X$, any neighborhood $U$ of $\gamma$ and any $\delta>0$, there is $Y\in\mathcal{U}$ such that:
\begin{enumerate}
\item $X=Y$ outside $U$;
\item there exist two distinct hyperbolic periodic orbits $Q_1$ and $Q_2$ of $Y$ contained in $U$ with a time reparametrization $\theta:\mathbb{R}\to\mathbb{R}$ such that for some $x\in Q_1$ and $x\in Q_2$, $d(\varphi^Y_t(x),\varphi^Y_{\theta(t)}(y))<\delta \|Y(\varphi_t(x))\|$ for all $t\in\mathbb{R}$.
\end{enumerate}
\end{Lemma}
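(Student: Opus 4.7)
The plan is a Franks-type perturbation realized at a cross-section of $Q$. Fix $p\in Q$, let $\Sigma$ be a small disk transverse to $X(p)$ at $p$, and let $\pi:\Sigma_0\to\Sigma$ be the first-return Poincar\'e map; by hypothesis $\pi(p)=p$ and $D_p\pi$ has some eigenvalue $\mu$ on the unit circle. I would choose a flowbox $V\subset U$ containing an arc of $Q$ through $p$ and meeting $Q$ only along that arc, so that any $C^1$-small perturbation $\tilde\pi$ of $\pi$ supported in a small neighborhood of $p$ in $\Sigma$ lifts to a vector field $Y$ with $X=Y$ outside $V$ (hence outside $U$), with $\|Y-X\|_{C^1}$ controlled by $\|\tilde\pi-\pi\|_{C^1}$ via the standard flowbox-to-diffeomorphism correspondence.

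The core step is to construct $\tilde\pi$ with two distinct hyperbolic fixed points $p_1,p_2$ arbitrarily close to $p$, using an arbitrarily small $C^1$-perturbation. In the real case $\mu=1$, I would use a saddle-node normal form: adding $\varepsilon\,\chi(z)\,v$, where $v$ spans the $\mu=1$-eigenspace and $\chi$ is a small $C^1$ bump, produces two nearby fixed points whose center eigenvalues are $1\pm 2\sqrt{a\varepsilon}$ (with $a$ the leading nonlinear coefficient, generic after an initial quadratic tilt), both strictly off the unit circle, and the preserved hyperbolic splitting in the complementary directions makes them fully hyperbolic. The cases $\mu=-1$ and $|\mu|=1$ complex I would reduce to the previous one by a preliminary $C^1$-small Franks perturbation of $D_p\pi$ that moves $\mu$ to a rational angle $e^{2\pi i j/q}$, then replace $\pi$ by $\pi^q$, which now has eigenvalue $1$ at $p$; the resulting periodic orbits $Q_1,Q_2$ of $Y$ will have period approximately $q\cdot\mathrm{per}(Q)$ and stay inside $U$ provided $V$ was chosen to contain all of $Q$.

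Finally, I would verify the rescaling estimate. Since $Q$ is a periodic orbit and not a singularity, $\|X\|$ is bounded below by some $c>0$ on a neighborhood of $Q$, and the same lower bound persists on $Q_1\cup Q_2$ for $Y$ sufficiently $C^1$-close to $X$. Taking $x=p_1$ and $y=p_2$, and defining $\theta$ by matching the $n$-th return of $y$ to $\Sigma$ with the $n$-th return of $x$ and interpolating affinely between consecutive returns, the two trajectories lie in the same flowbox tube of thickness comparable to $d(p_1,p_2)$; continuous dependence on initial conditions, together with the flow-speed lower bound $\|Y(\varphi_t^Y(x))\|\ge c/2$, gives $d(\varphi_t^Y(x),\varphi_{\theta(t)}^Y(y))\le K\,d(p_1,p_2)\le \delta\,\|Y(\varphi_t^Y(x))\|$ for all $t\in\mathbb{R}$, as soon as $K\,d(p_1,p_2)<\delta c/2$, which is achieved by shrinking the perturbation.

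The main obstacle will be the bifurcation analysis in the complex-eigenvalue case, since a naive arbitrarily small perturbation of $\pi$ there typically produces a Neimark--Sacker invariant circle rather than additional periodic points; the remedy is precisely the rotate-and-iterate trick above, the delicate part being to carry out the Franks rotation and the subsequent saddle-node tilt simultaneously while keeping the total perturbation $C^1$-small, supported in $V\subset U$, and compatible with the flowbox lift.
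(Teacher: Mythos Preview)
Your overall strategy---Poincar\'e section, Franks-type perturbation, rationalize the complex eigenvalue---matches the paper's. The substantive difference is in how the two nearby periodic orbits are produced. You invoke a saddle-node unfolding (and, in the complex case, a bifurcation analysis of $\pi^q$) to manufacture two hyperbolic fixed points directly. The paper instead uses the full Franks lemma for flows (citing \cite{MSS}, Lemma~1.3) to \emph{linearize} the return map near $p$: after arranging $(D_p\pi)^k|_V=\mathrm{id}$, one replaces $\pi$ near $p$ by $\exp_p\circ D_p\pi\circ\exp_p^{-1}$, so the new return map literally satisfies $\pi^k=\mathrm{id}$ on a small disc (or arc) in $\exp_p(V)$. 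This yields a continuum of period-$k$ points; one picks any two sufficiently close ones and then makes one further tiny perturbation rendering just those two orbits hyperbolic while preserving them.

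This linearization trick sidesteps exactly the obstacle you flag at the end. Your reduction in the complex case has a genuine gap as written: you propose to apply the saddle-node unfolding to $\pi^q$, but since $p$ is already a fixed point of $\pi$, any local perturbation of $X$ supported near $p$ is felt on \emph{every} pass through the flowbox, so you cannot freely prescribe a perturbation of $\pi^q$ independently of its effect on $\pi,\pi^2,\ldots,\pi^{q-1}$; moreover, the center eigenspace of $\pi^q$ is now two-dimensional with $(D_p\pi)^q|_V=\mathrm{id}$, and a one-dimensional saddle-node picture does not apply directly. These issues are fixable (e.g.\ spreading the perturbation over several cross-sections along $Q$ and analyzing the resulting $\mathbb{Z}_q$-equivariant bifurcation), but the paper's linearization avoids all of it and treats the real and complex cases uniformly. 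Your final step---using a lower bound $\|Y\|\ge a>0$ near $Q$ to convert absolute $a\delta$-closeness into rescaled $\delta\|Y\|$-closeness---is exactly what the paper does.
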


\begin{proof}
Let $Q$ be a non-hyperbolic periodic orbit of $X$. Take $q\in Q$ and denote $N_q(r)$ the $r$-ball of the center the origin $0_q$ in the normal space $N_q$. There is  $r>0$ such that the first return map $f^X_q:N_p(r)\to N_p$ of $X$ is well defined such that $D_{0_p}f^X_q$ has an eigenvalue $\lambda$ on the unit circle. With an arbitrarily small perturbation if necessary, we can assume $\lambda$ is a simple eigenvalue and there are no other eigenvalues of $D_{0_p}f^X_q$ on the unit circle except $\lambda$ and $\bar{\lambda}$. Let $V$ be the eigenspace associated to $\lambda$. If $\lambda$ is complex, with an arbitrarily small perturbation near $Q$ if necessary, we can assume that $D_{0_p}f^X_q|_{V}$ is a rational rotation. In any case we can assume that $(D_{0_p}f^X_q)^k|_{V}=id$ for some positive integer $k$. Then by a standard perturbation argument (see Lemma 1.3 of \cite{MSS} for a precise proof), there is $Y_0$ arbitrarily close to $X$ that keeps the orbit $Q$ unchanged such that $f_p^{Y_0}=\exp_p\circ D_{0_q}f^X_q\circ\exp_q^{-1}$ in a small neighborhood of $0_p$ where $f_p^{Y_0}$ denotes the first return map of $Y_0$ on $N_q$. Note that all perturbations here can be supported on an arbitrarily small neighborhood of $Q$. That is, we can assume $Y_0=X$ on $M\setminus U$ for any given neighborhood $U$ of $Q$.

Thus we may assume that $U$ contains no singularities of $X$, and hence there is $a>0$ such that $\|X(x)\|>a$ for every $x\in U$. Since $Y_0$ can be chosen arbitrarily close to $X$, we can assume $\|Y_0(x)\|>a$ for all $x\in U$.
Since $(f_p^{Y_0})^k=id$ in a small disc or arc  in $\exp_p(V)$ centered at $0_p$, for any $\delta>0$, we can choose distinct $x,y\in \exp_p(V)$ arbitrarily close to $q$ together with an increasing homeomorphism $\theta:\mathbb{R}\to\mathbb{R}$ such that $d(\varphi_t^{Y_0}(x),\varphi_{\theta(t)}^{Y_0}(y)<a\delta$ for all $t\in\mathbb{R}$. With an arbitrarily small perturbation $Y$ of $Y_0$ that keeps the orbits of $x,y$ unchanged, we may assume $Q_1={\rm Orb}(x)$ and $Q_2={\rm Orb}(y)$ are hyperbolic.  This ends the proof of  Lemma \ref{C1}.
\end{proof}

\begin{Proposition}\label{R_1}
There is a residual set $\mathcal{R}_1\subset \mathcal{X}^1(M)$ such that, for any
$X\in\mathcal{R}_1$, if there are $X_n\to X$ and non-hyperbolic periodic orbits $Q_n$ of $X_n$ that converge to a compact set $\Gamma$ in the Hausdorff metric, then there are  two sequences of hyperbolic periodic points $\{p_n\},\{q_n\}$ of $X$ with the following properties:

(1) for any $n$, ${\rm Orb}(p_n)\neq {\rm Orb}(q_n)$, and there is an increasing homeomorphism $\theta_n:\mathbb{R}\to \mathbb{R}$
such that $d(\varphi_t(p_n),\varphi_{\theta_n(t)}(q_n))<({1}/{n}) \|X(\varphi_t(p_n))\|$ for all $t\in\mathbb{R}$.

(2) ${\rm Orb}(p_n)$ and ${\rm Orb}(q_n)$ converge to $\Gamma$ in the Hausdorff metric.
\end{Proposition}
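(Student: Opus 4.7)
The plan is the standard residual-set construction in $C^1$-generic dynamics, using Lemma \ref{C1} as the key perturbation tool that converts any non-hyperbolic periodic orbit into a pair of distinct hyperbolic periodic orbits with prescribed rescaled shadowing. Fix a countable basis $\mathcal{V}=\{V_k\}_{k\in\mathbb{N}}$ of the topology of $M$. For each finite tuple $\mathcal{F}=(V_0,V_1,\dots,V_m)$ drawn from $\mathcal{V}$ and each $n\in\mathbb{N}$, I would define $\mathcal{A}(\mathcal{F},n)$ to be the set of $X\in\mathcal{X}^1(M)$ admitting two distinct hyperbolic periodic orbits $\gamma_1,\gamma_2\subset V_0$ with $(\gamma_1\cup\gamma_2)\cap V_j\neq\emptyset$ for each $j\geq 1$, together with points $x\in\gamma_1$, $y\in\gamma_2$ and an increasing homeomorphism $\theta:\mathbb{R}\to\mathbb{R}$ satisfying $d(\varphi_t^X(x),\varphi_{\theta(t)}^X(y))<(1/n)\|X(\varphi_t^X(x))\|$ for all $t\in\mathbb{R}$; and $\mathcal{B}(\mathcal{F},n)$ to be the $C^1$-interior of those $X$ whose $C^1$-neighborhood contains no vector field with a non-hyperbolic periodic orbit $Q\subset V_0$ meeting every $V_j$, $j\geq 1$. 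Both sets are open; for $\mathcal{A}(\mathcal{F},n)$ this uses the continuous dependence of hyperbolic periodic orbits on $X$, together with the observation from the proof of Lemma \ref{C1} that its two orbits can be taken of common period, which reduces the shadowing condition over all of $\mathbb{R}$ to a strict finite-time inequality that persists under small perturbations.

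The core step is to verify that $\mathcal{A}(\mathcal{F},n)\cup\mathcal{B}(\mathcal{F},n)$ is dense. If $X\notin\mathcal{B}(\mathcal{F},n)$, then by definition every $C^1$-neighborhood of $X$ contains some $Y$ with a non-hyperbolic periodic orbit $Q\subset V_0$ meeting each $V_j$. Pick a neighborhood $U$ of $Q$ with $\overline{U}\subset V_0$, chosen small enough that any closed subset of $U$ Hausdorff-close to $Q$ still meets each $V_j$. Apply Lemma \ref{C1} to $Y$ with this $U$ and with $\delta=1/n$: we obtain $Z$ arbitrarily close to $Y$ (hence to $X$) with two distinct hyperbolic periodic orbits $\gamma_1,\gamma_2\subset U\subset V_0$ that rescaled $(1/n)$-shadow each other. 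Since Lemma \ref{C1} produces $\gamma_1,\gamma_2$ through a first-return-map perturbation supported in $U$ with periodic points chosen arbitrarily close to $Q$, the orbits $\gamma_1,\gamma_2$ remain Hausdorff-close to $Q$ and therefore meet each $V_j$; hence $Z\in\mathcal{A}(\mathcal{F},n)$. Setting $\mathcal{R}_1=\bigcap_{\mathcal{F},n}(\mathcal{A}(\mathcal{F},n)\cup\mathcal{B}(\mathcal{F},n))$ then gives a residual subset of $\mathcal{X}^1(M)$.

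For the verification, let $X\in\mathcal{R}_1$ and let $X_k\to X$ carry non-hyperbolic periodic orbits $Q_k$ with $Q_k\to\Gamma$ in Hausdorff metric. Given $n\in\mathbb{N}$, choose a finite tuple $\mathcal{F}_n=(V_0^n,V_1^n,\dots,V_{m_n}^n)$ from $\mathcal{V}$ such that $\Gamma\subset V_0^n$, $V_0^n$ lies in the $(1/n)$-neighborhood of $\Gamma$, and $V_1^n,\dots,V_{m_n}^n$ cover $\Gamma$ by open sets of diameter less than $1/n$. Hausdorff convergence forces, for all sufficiently large $k$, both $Q_k\subset V_0^n$ and $Q_k\cap V_j^n\neq\emptyset$ for each $j\geq 1$. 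Thus $X_k\notin\mathcal{B}(\mathcal{F}_n,n)$ for large $k$, and openness of $\mathcal{B}(\mathcal{F}_n,n)$ then forces $X\notin\mathcal{B}(\mathcal{F}_n,n)$, so $X\in\mathcal{A}(\mathcal{F}_n,n)$. Selecting $p_n,q_n$ on the two resulting distinct hyperbolic periodic orbits of $X$ yields the required rescaled $(1/n)$-shadowing, together with ${\rm Orb}(p_n)\cup{\rm Orb}(q_n)\subset V_0^n$ meeting every $V_j^n$, so these orbits Hausdorff-converge to $\Gamma$ as $n\to\infty$.

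The main obstacle is the robustness bookkeeping on both sides. First, openness of $\mathcal{A}(\mathcal{F},n)$ hinges on the fact that the orbits produced by Lemma \ref{C1} can be taken with a common period, so that the otherwise delicate $\mathbb{R}$-long shadowing reduces to a finite-time strict inequality stable under $C^1$ perturbation. Second, in the density step one must keep the two newly created hyperbolic orbits both inside $V_0$ and intersecting every $V_j$; this forces a careful choice of the neighborhood $U$ around $Q$ and relies crucially on the localization of the perturbation in Lemma \ref{C1}, which keeps $\gamma_1,\gamma_2$ Hausdorff-close to $Q$.
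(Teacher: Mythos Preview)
Your proposal is correct and follows essentially the same residual-set strategy as the paper; the only cosmetic difference is that the paper indexes its open-dense sets by a countable basis $\{\mathcal{O}_k\}$ of the hyperspace $\mathcal{K}(M)$ (so ``${\rm Orb}(p),{\rm Orb}(q)\in\mathcal{O}_k$'' directly encodes Hausdorff-closeness), whereas you simulate Hausdorff neighborhoods with finite tuples of basic open sets of $M$. The paper also sidesteps your openness discussion for $\mathcal{A}(\mathcal{F},n)$ by building robustness into its set $\mathcal{H}_{n,k}$ (requiring a whole $C^1$-neighborhood of $Y$ to have the two-orbit property), but then must note---exactly as you do---that persistence of hyperbolic periodic orbits puts the perturbed $Z$ from Lemma~\ref{C1} into $\mathcal{H}_{n,k}$; so the substance is the same.
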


\begin{proof}
Let $\mathcal{K}(M)$ be the space of
nonempty compact subsets of $M$ with the Hausdorff metric, and
 $\{ \mathcal{O}_n \}_{n=1}^{\infty}$ be a countable basis of
$\mathcal{K}(M)$. For each pair of positive integers $n$ and $k$, denote by $\mathcal{H}_{n,k}$ the subset of $\mathcal{X}^1(M)$ such that any $Y\in\mathcal{H}_{n,k}$ has a $C^1$ neighborhood $\mathcal{V}$ in $\mathcal{X}^1(M)$ such that every $Z\in\mathcal{V}$ has two hyperbolic periodic points $p$ and $q$ such that (a) ${\rm Orb}(p)$ and ${\rm Orb}(q)$ are distinct and both in $\mathcal{O}_k$, (b) there is an increasing homeomorphism $\theta:\mathbb{R}\to\mathbb{R}$ such that
$$d(\varphi^Z_t(p),\varphi^Z_{\theta(t)}(q))<\frac{1}{n}\|Z(\varphi_t^Z(p))\|$$
for all $t\in\mathbb{R}$.

Let $\mathcal{N}_{n,k}$ be the complement of the $C^1$-closure of $\mathcal{H}_{n,k}$. Clearly,  for every pair $(n,k)$, $\mathcal{H}_{n,k}\cup\mathcal{N}_{n,k}$ is $C^1$ open and dense in $\mathcal{X}^1(M)$. Let $\mathcal{KS}$ denote the set of Kupka-Smale systems in $\mathcal{X}^1(M)$. Denote
$$\mathcal{R}_1=(\bigcap_{n, k\in\mathbb{N}}(\mathcal{H}_{n, k}\cup\mathcal{N}_{n, k}))\cap \mathcal{KS}.$$
Then $\mathcal{R}_1$ is $C^1$ residual.

Let $X\in\mathcal{R}_1$. Assume $X_n\to X$. Also assume there are non-hyperbolic periodic orbits $Q_n$ of $X_n$ that converge to a compact set $\Gamma$ in Hausdorff metric. Then for any neighborhood $\mathcal{O}$ of $\Gamma$ in $\mathcal{K}(M)$, there is $\mathcal{O}_k$ with $\Gamma\in\mathcal{O}_k\subset \mathcal{O}$. By Lemma 5.1, for any positive integer $n$ and any neighborhood $\mathcal{U}$ of $X$, there are $Z\in \mathcal{U}$ and two  hyperbolic periodic points $p$ and $q$ of $Z$ such that (a) ${\rm Orb}(p)$ and ${\rm Orb}(q)$ are distinct and both  in $\mathcal{O}_k$, (b) there is an increasing homeomorphism $\theta:\mathbb{R}\to\mathbb{R}$ such that
$$d(\varphi^Z_t(p),\varphi^Z_{\theta(t)}(q))<\frac{1}{n}\|Z(\varphi_t^Z(p))\|$$
for all $t\in\mathbb{R}$. Since a hyperbolic periodic orbit is persistent under $C^1$ perturbations, $Z\in\mathcal{H}_{n,k}$. Hence for any pair of positive integers $(n,k)$, $X$ is in the closure of $\mathcal{H}_{n,k}$ and hence not in $\mathcal{N}_{n,k}$. This means $X\in\mathcal{H}_{n,k}$ for all $(n,k)$. Hence for any neighborhood $\mathcal{O}$ of $\Gamma$ and any positive integer $n$, $X$ has two hyperbolic periodic points $p$ and $q$ of distinct orbits that are in $\mathcal{O}$ together with an increasing homeomorphism $\theta:\mathbb{R}\to \mathbb{R}$
such that $d(\varphi_t(p),\varphi_{\theta(t)}(q))<(1/n) \|X(\varphi_t(p)\|$ for all $t\in\mathbb{R}$. This ends the proof of Proposition \ref{R_1}.
\end{proof}

We need the recent result of Bonatti-da Luz:

\begin{Proposition}\label{BL} $($\cite{BL1, BL2}$)$
There is a residual set $\mathcal{R}_2\subset \mathcal{X}^1(M)$ such that any $X\in\mathcal{R}_2$ is a star flow if and only if any chain class $\Lambda$ of $X$ is multisingular hyperbolic.
\end{Proposition}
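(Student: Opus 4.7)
The plan is to prove each direction separately. The implication that every chain class being multisingular hyperbolic forces the star property is the technically lighter one; the converse is where the residual hypothesis is essential.

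For the easier direction I would establish the following robustness: if every chain class of $X$ is multisingular hyperbolic, then the maximal invariant set in a small isolating neighborhood of each chain class is still multisingular hyperbolic for every $Y$ sufficiently $C^1$-close to $X$. This uses the standard persistence of a dominated splitting for the extended linear Poincar\'e flow $\tilde\psi_t$, together with continuous dependence of reparametrizing cocycles: inside the isolating neighborhood of a singularity $\sigma$, a pragmatical cocycle agrees with $\|\Phi_t\|$ and hence varies continuously with the vector field in the $C^1$ topology, while outside it is identically $1$. Robustness then forces every periodic orbit and every singularity of $Y$ lying in these neighborhoods to be hyperbolic; since outside a finite union of such neighborhoods no recurrent behavior occurs, $Y$ is star.

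For the converse I would construct $\mathcal{R}_2$ as a countable intersection of open-dense sets enforcing Kupka--Smale, generic density of periodic orbits in chain classes (Pugh--M\~an\'e--Crovisier type), coincidence of chain classes of hyperbolic critical elements with their homoclinic classes, and a family of conditions excluding pathological approximations. For $X\in\mathcal{R}_2$ star and $\Lambda$ a chain class, the strategy is: (i) exploit the star hypothesis together with a connecting-lemma/perturbation argument to show that the normal bundle over every periodic orbit in $\Lambda$ admits a hyperbolic splitting for the linear Poincar\'e flow with bounds uniform over all periodic orbits; otherwise an arbitrarily small perturbation would produce a non-hyperbolic periodic orbit, contradicting the star property. (ii) Passing to limits on $\tilde\Lambda$, extract a dominated splitting $N_{\tilde\Lambda}=\Delta^s\oplus\Delta^u$ for $\tilde\psi_t$ whose dominance constants are inherited from step (i). (iii) For each singularity $\sigma\in\Lambda$, introduce a pragmatical cocycle relative to an isolating neighborhood of $\sigma$, and verify that products of these pragmatical cocycles over the singularities of $\Lambda$ furnish reparametrizing cocycles $h^s_t,h^u_t$ that make $\Delta^s$ contracting and $\Delta^u$ expanding for the rescaled flows.

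The main obstacle is step (iii). Near a hyperbolic singularity the rates of $\tilde\psi_t$ on $\tilde\Lambda$ differ from those of the tangent flow $\Phi_t$ by the multiplicative factor $\|\Phi_t\|$; this factor oscillates along orbits that repeatedly approach and leave $\sigma$, and it is precisely what prevents the linear Poincar\'e flow itself from being hyperbolic in the singular case. The pragmatical cocycle rescales $\tilde\psi_t$ so that, inside the singular region, the rescaled flow agrees with the restriction of $\Phi_t$ to $\tilde\Lambda$ and inherits the genuine hyperbolicity coming from $DX(\sigma)$; outside the isolating neighborhood the cocycle is trivial and $\tilde\psi_t$ is untouched. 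Checking that these regional behaviors patch coherently across the boundaries of isolating neighborhoods, and that the abstract splitting $\Delta^s\oplus\Delta^u$ extracted in step (ii) is compatible with the stable and unstable eigenspaces of $DX(\sigma)$ in the sense required by the definition of multisingular hyperbolicity, is the technical core of the Bonatti--da Luz argument and is the step I expect to be hardest to execute rigorously.
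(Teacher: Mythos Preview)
The paper does not supply a proof of this proposition. It is quoted as a result of Bonatti--da Luz and cited directly from \cite{BL1, BL2}; the authors invoke it as a black box in the proof of Theorem~B (for the implication $(2)\Rightarrow(3)$) and make no attempt to reproduce the argument. Your outline is a plausible high-level sketch of the Bonatti--da Luz strategy, and you correctly identify the construction and compatibility of the pragmatical cocycles near singularities as the technical core. But there is nothing in the present paper to compare your proposal against: no proof of this proposition is given here, and none was intended.
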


Now we prove Theorem B.

\bigskip
{\noindent\bf Proof of Theorem B}. ~ Let
$$\mathcal{R}=\mathcal{R}_1\cap \mathcal{R}_2.$$
We prove $\mathcal{R}$ satisfies Theorem B. Thus let $X\in \mathcal{R}$ and let $\Lambda$ be an isolated chain transitive set of $X$. We prove the three items of Theorem B circularly. Since $(2)\Rightarrow (3)$ is guaranteed by Proposition \ref{BL} (Proposition \ref{BL} is global, but it obviously applies to our case of an isolated chain transitive set)  and $(3)\Rightarrow (1)$ is guaranteed by Theorem A, it remains to prove $(1)\Rightarrow (2)$.

\vskip 0.2cm

\nt {\it Proof.} Assume $\Lambda$ is rescaling expansive. We prove $\Lambda$ is locally star. Suppose for the contrary there are  $X_n\to X$ with non-hyperbolic periodic orbits $Q_n$ of $X_n$ that converge to a compact set $\Gamma\subset\Lambda$ in the Hausdorff metric. By Proposition \ref{R_1}, there are two sequences of hyperbolic periodic points $\{p_n\},\{q_n\}$ of $X$ with the following properties:

(1) for any $n$, ${\rm Orb}(p_n)\neq{\rm Orb}(q_n)$, and there is an increasing homeomorphism $\theta_n:\mathbb{R}\to \mathbb{R}$
such that $d(\varphi_t(p_n),\varphi_{\theta_n(t)}(q_n))<({1}/{n}) \|X(\varphi_t(p_n))\|$ for all $t\in\mathbb{R}$.

(2) ${\rm Orb}(p_n)$ and ${\rm Orb}(Q_n)$ converge to $\Gamma$ in the Hausdorff metric.

Since $\Lambda$ is isolated for $X$,  $p_n,q_n\in\Lambda$ for large $n$. This contradicts the assumption that $\Lambda$ is rescaling expansive. This proves $(1)\Rightarrow (2)$ and hence Theorem B.

\vskip 0.2cm
\nt {\bf Remark.} We may add item (4) as to be ``$\Lambda$ is naive multisingular hyperbolic for $X$". Then the four items are equivalent. This is  because $(3)\Rightarrow (4)$ is obvious and  $(4)\Rightarrow (1)$ is contained in (the proof of) Theorem A. In other words, generically, naive multisingular hyperbolicity is equivalent to multisingular hyperbolicity.

\section{Appendix}

In this appendix we discuss the equivalence between the rescaled expansiveness and the Komuro expansiveness for non-singular flows. We also include a third condition, the expansiveness of Bowen-Walters [BW] and  Keynes-Sears [KS].

\begin{Proposition}\label{equivalence}
Let $\varphi_t$ be a continuous flow on a compact metric space $M$ without singularities. Then the following three conditions are equivalent:

%\begin{enumerate}
$(1)$ For any $\epsilon>0$, there is $\delta>0$ such that for any $x,y\in M$ and any increasing continuous functions $\theta: \mathbb R\to \mathbb R$, if $d(\varphi_t(x),\varphi_{\theta(t)}(y))\le\delta$ for all $t\in\mathbb{R}$, then $\varphi_{\theta(t)}(y)\in\varphi_{[-\epsilon,\epsilon]}(\varphi_t(x))$ for all $t\in\mathbb{R}$;

$(2)$ For any $\epsilon>0$, there is $\delta>0$ such that for any $x,y\in M$ and any increasing continuous functions $\theta: \mathbb R\to \mathbb R$, if $d(\varphi_t(x),\varphi_{\theta(t)}(y))\le\delta$ for all $t\in\mathbb{R}$, then $\varphi_{\theta(0)}(y)\in\varphi_{[-\epsilon,\epsilon]}(x)$;

$(3)$ For any $\epsilon>0$, there is $\delta>0$ such that for any $x,y\in M$ and any
surjective increasing continuous functions $\theta: \mathbb R\to \mathbb R$,
if $d(\varphi_t(x),\varphi_{\theta(t)}(y))\le\delta$ for all $t\in\mathbb{R}$,
then $\varphi_{\theta(t_0)}(y)\in\varphi_{[-\epsilon,\epsilon]}(\varphi_{t_0}(x))$ for some $t_0\in R$;
%\end{enumerate}
\end{Proposition}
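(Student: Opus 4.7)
I would prove $(1)\Leftrightarrow(2)$ by a shift identity, $(1)\Rightarrow(3)$ as a trivial specialization, and concentrate the substance on $(3)\Rightarrow(2)$; these three together close the triangle. The implication $(1)\Rightarrow(2)$ is immediate by evaluating the conclusion of (1) at $t=0$. For $(2)\Rightarrow(1)$, I would reuse the shift device at the end of the proof of Theorem A: given $(x,y,\theta)$ satisfying the hypothesis of (1) and any $\tau\in\mathbb{R}$, the triple $(\varphi_\tau(x),\varphi_{\theta(\tau)}(y),\theta(\cdot+\tau)-\theta(\tau))$ still satisfies the same hypothesis with the same $\delta$, so (2) gives the conclusion of (1) at time $\tau$. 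And $(1)\Rightarrow(3)$ follows by setting $t_0=0$, since every surjective increasing continuous function is in particular increasing continuous.

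For $(3)\Rightarrow(2)$, fix $\epsilon>0$, pick $\epsilon_1<\epsilon/3$, let $\delta_1$ be the constant from (3) applied to $\epsilon_1$, and choose $\delta\le\delta_1$ small. My first substep is to force $\theta$ to be surjective, the non-singular analog of Corollary \ref{cor}. A non-singular continuous flow on compact $M$ admits a finite covering by flow boxes (Whitney--Bebutov), which yields uniform constants $\eta_0,T_0>0$ such that whenever an orbit segment of length at least $T_0$ stays inside a ball of radius $\eta_0$, the corresponding orbit is periodic of diameter at most $2\eta_0$. If $\theta(t)\to L<\infty$ as $t\to+\infty$, then $\varphi_{\theta(t)}(y)\to\varphi_L(y)$, so $\omega(x)$ sits in a $\delta$-ball; choosing $\delta<\eta_0$ contradicts this estimate. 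Hence $\theta$ is surjective, so (3) supplies $t_0\in\mathbb{R}$ and $|s_0|\le\epsilon_1$ with $\varphi_{\theta(t_0)}(y)=\varphi_{t_0+s_0}(x)$, placing $y$ on $\mathrm{Orb}(x)$; write $y=\varphi_{r_0}(x)$ with $r_0=t_0+s_0-\theta(t_0)$.

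My second substep is to propagate this to $t=0$. Setting $s(t):=\theta(t)+r_0-t$, the shadowing rewrites as $d(\varphi_t(x),\varphi_{t+s(t)}(x))\le\delta$, with $s$ continuous and $|s(t_0)|\le\epsilon_1$; I need $|s(0)|\le\epsilon$ modulo the period of $x$. I would split into cases on the orbit type. If $\mathrm{Orb}(x)$ is non-periodic, the flow-box estimate together with continuity of $s$ keeps $|s(t)|$ uniformly small. If $\mathrm{Orb}(x)$ is periodic of period $\pi\le 2\epsilon$, the whole orbit sits in the $\epsilon$-ball about $x$ and the conclusion is automatic. If $\pi>2\epsilon$, continuity of $s(\cdot)$ together with a positive lower bound on $\min\{d(z,\varphi_s(z)):z\in\mathrm{Orb}(x),\,\epsilon\le|s|\le\pi-\epsilon\}$ (supplied by flow-box uniformity) prevents $s(t)$ from drifting past $\pm\epsilon$. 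The main obstacle throughout is establishing and applying this uniform flow-box control, which is the precise place where non-singularity of the flow is essential: in its absence the rescaled and Komuro notions can separate, as the body of the paper already illustrates.
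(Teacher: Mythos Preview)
Your treatment of $(1)\Leftrightarrow(2)$ coincides with the paper's: evaluate at $t=0$ for one direction, and use the shift $\big(\varphi_\tau(x),\,\varphi_{\theta(\tau)}(y),\,\theta(\cdot+\tau)-\theta(\tau)\big)$ for the other. The paper does not reprove $(2)\Leftrightarrow(3)$ at all but simply cites Oka~\cite{O}; your direct argument for $(3)\Rightarrow(2)$ is therefore a genuine alternative. The surjectivity substep is fine once stated cleanly: a non-singular continuous flow on a compact space has a uniform positive lower bound on the diameter of every orbit (for each $x$ pick $t_x$ with $\varphi_{t_x}(x)\neq x$, pass to a finite cover), so $\omega(x)$ cannot lie in a $\delta$-ball for small $\delta$; your phrasing via ``periodic of diameter at most $2\eta_0$'' is an unnecessary detour.

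The soft spot is your propagation step. In the periodic case you invoke a positive lower bound on $\min\{d(z,\varphi_s(z)):z\in\mathrm{Orb}(x),\ \epsilon\le|s|\le\pi-\epsilon\}$ ``supplied by flow-box uniformity.'' Flow-box uniformity only controls $d(z,\varphi_s(z))$ for $|s|$ up to some fixed $\tau_0$; for $|s|$ up to $\pi-\epsilon$ with $\pi$ arbitrarily large, or for a recurrent aperiodic orbit, no such uniform bound exists (orbits may return $\delta$-close to themselves after long times). As written, your $\delta$ would depend on $x$. The correct argument---which your phrase ``continuity of $s(\cdot)$'' suggests you intend---is a barrier: choose $\epsilon<\tau_0$, and by compactness of $M\times\{s:\epsilon\le|s|\le\tau_0\}$ obtain $\delta_0>0$ with $d(z,\varphi_s(z))\ge\delta_0$ there. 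Taking $\delta<\delta_0$, the set $\{t:|s(t)|\le\tau_0\}$ is closed, and on it one has $|s(t)|<\epsilon<\tau_0$, so it is also open; since it contains $t_0$ it is all of $\mathbb R$, and in particular $|s(0)|<\epsilon$. This handles all orbits uniformly with no case split.
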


In case $\varphi_t$ is generated by a $C^1$ vector field $X$, item (1) of Proposition \ref{equivalence} is just the rescaled expansiveness because, in the non-singular case, $\|X(x)\|$ has an upper bound and also a positive lower bound. Item (3) is just  the expansiveness of Komuro [Kom1]. Item (2) is  the expansiveness of Bowen-Walter [BW] and Keynes-Sears [KS]. Thus Proposition \ref{equivalence} says that, for nonsingular flows, the three versions of expansiveness are equivalent.

 Note that for flows with singularities item (2) and item (3) are not equivalent: Komuro [Kom1] has proved that the geometrical Lorenz attractor satisfies item (3) but not item (2).

\vskip 0.4cm

{\noindent\it Proof.} That $(2)\Leftrightarrow (3)$ is proved by Oka [O]. Since $(1)\Rightarrow (2)$ is obvious, we only prove $(2)\Rightarrow (1)$.

Assume item $(2)$ holds. Let $\delta$ be chosen in item $(2)$ associated to $\epsilon$. Let $x,y\in M$ and any surjective increasing continuous functions $\theta: \mathbb R\to \mathbb R$ be given such that $d(\varphi_{\theta(t)}(y),\varphi_{t}(x))\le\delta$ for all $t\in\mathbb{R}$. For any $\tau\in\mathbb{R}$,  set
$$z=\varphi_\tau(x), ~ y_1=\varphi_{\theta(\tau)}(y), ~ \theta_1(t)=\theta(t+\tau)-\theta(\tau).$$ Then
$$d(\varphi_{\theta_1(t)}(y_1),\varphi_t(z))=d(\varphi_{\theta(t+\tau)}(y),\varphi_{t+\tau}(x))\le\delta.$$ Hence by item $(2)$ we have $y_1=\varphi_{\theta_1(0)}(y_1)\in\varphi_{[-\epsilon, ~\epsilon]}(z)$. Thus $$\varphi_{\theta(\tau)}(y)\in\varphi_{[-\epsilon, ~\epsilon]}(\varphi_{\tau}(x)).$$
This means item $(1)$ holds, proving Proposition 6.1.

\subsection*{Acknowledgement}
The first author is supported by National Natural Science
Foundation of China (No. 11671025 and No. 11571188)
and the Fundamental Research Funds for the Central Universities.
The second author is
supported by National Natural Science
Foundation of China (No. 11231001). We thank Shaobo Gan, Ming Li and Dawei Yang for many discussions and communications. We also thank Christian Bonatti and Adriana da Luz for an early preprint of their recent paper.

\end{document}